\documentclass[a4paper, 11pt]{article}

\usepackage[a4paper, left=3.5cm,top=3.4cm,right=3.5cm,bottom=3.8cm]{geometry}
\setlength{\parskip}{0.7em} \setlength{\parindent}{0em}

\usepackage{hyperref}

\usepackage{graphicx}
\usepackage{epstopdf}
\usepackage{bold-extra}
\usepackage{mathtools}
\usepackage{psfrag}
\usepackage{graphicx}
\usepackage{pgfplots}
\usepackage{tikz}
\usepackage{tikz-3dplot}
\usepackage{amssymb}
\usepackage{subcaption}
\usepackage{enumitem}
\setitemize{label=\scriptsize{$\blacksquare$},topsep=0em,itemsep=0em}
\setenumerate{label=(\alph*),topsep=0em,itemsep=0em}
\usepackage{framed}

\usepackage{amssymb}

\usetikzlibrary{calc}

\usepackage{authblk}
 \setcounter{Maxaffil}{4}

\usepackage{xspace}

\usepackage{enumitem}
\setitemize{label=\scriptsize{$\blacksquare$}}
\setenumerate{label=(\alph*)}
\usepackage{amssymb,amsthm}

\usepackage{soul}
\usepackage{xcolor}
\colorlet{lred}{red!40}
\colorlet{lgreen}{green!40}
\colorlet{lblue}{blue!40}

\newtheorem{theorem}{Theorem}
\newtheorem{alg}[theorem]{Algorithm}
\newtheorem{lemma}[theorem]{Lemma}

\newtheorem{proposition}[theorem]{Proposition}

\newtheorem{corollary}[theorem]{Corollary}

\numberwithin{equation}{section}
\numberwithin{figure}{section}
\numberwithin{theorem}{section}

\newcommand{\R}{\mathbb R}

\newcommand{\N}{\mathbb N}

\newcommand{\edot}{\,\cdot\,}
\newcommand{\trans}{\mathsf{T}}

\newcommand{\f}{\mathbf}

\newcommand{\Wo}{\mathbf{W}}
\newcommand{\Ao}{\mathbf{A}}

\newcommand{\Rad}{\mathbf{R}}

\newcommand{\Ft}{\mathbf{F}}

\newcommand\abs[1]{\left\vert#1\right\vert}

\newcommand\norm[1]{\left\Vert#1\right\Vert}

\newcommand\set[1]{\bigl\{#1\bigr\}}
\newcommand{\kl}[1]{\left(#1\right)}

\newcommand{\skl}[1]{(#1)}

\makeatletter
\newcommand*\bigcdot{\mathpalette\bigcdot@{.6}}
\newcommand*\bigcdot@[2]{\mathbin{\vcenter{\hbox{\scalebox{#2}{$\m@th#1\bullet$}}}}}
\makeatother

\newcommand{\AxisRotator}[1][rotate=0]{%
	\tikz [x=0.25cm,y=0.60cm,line width=.2ex,-stealth,#1] \draw (0,0) arc (-150:150:1 and 1);%
}

\allowdisplaybreaks

\title{Full field inversion in photoacoustic tomography with variable sound speed}

\author{Gerhard Zangerl}
\author{Markus Haltmeier}
\affil{Department of Mathematics, University of Innsbruck\\
Technikestra{\ss}e 13, 6020 Innsbruck, Austria\\
E-mail: {\tt \{gerhard.zangerl,markus.haltmeier\}@uibk.ac.at}}

\author{Linh V. Nguyen}
\affil{Department of Mathematics, University of Idaho\\
875 Perimeter Dr, Moscow, ID 83844, US\\
E-mail: {\tt lnguyen@uidaho.edu}}

\author{Robert Nuster}
\affil{Department of Physics, University of Graz\\
Universit\"atsplatz 5, 8010 Graz, Austria\\
E-mail: {\tt ro.nuster@uni-graz.at}}

\date{August 2, 2018}

\begin{document}
\maketitle

\begin{abstract}
Recently, a novel measurement setup has been introduced to photoacoustic tomography, that
collects data in the form of projections of the full  3D acoustic pressure distribution
at a certain time instant. Existing imaging algorithms for this kind of data assume
a constant speed of sound. This assumption is not always met in practice and thus
leads to erroneous reconstructions.
In this paper, we present a two-step reconstruction method for full field
 detection photoacoustic tomography  that  takes variable speed of sound into account.
In the first step,  by applying   the inverse Radon transform, the pressure distribution at the measurement time  is reconstructed  point-wise from the projection data.
In the second step, one solves a final time wave inversion problem where
the initial pressure distribution is recovered from the known pressure distribution  at the measurement time. For the latter problem, we derive  an iterative solution approach, compute the required  adjoint operator,
and  show  its uniqueness and stability.
\end{abstract}

\section{Introduction} \label{sec:intro}

Photoacoustic tomography (PAT) is a hybrid imaging modality that combines high spatial resolution of ultrasound and high contrast of optical tomography  \cite{Bea11,KruKopAisReiKruKis00,WanAna11,Wan09b,XuWan06}. In PAT, a semitransparent sample is illuminated by a short laser pulse. As a result, parts of the optical energy are absorbed inside the sample. This
 causes an initial pressure distribution and a subsequent  acoustic pressure wave. The pressure wave is detected outside the
 investigated object and used to recover an image of the interior.

In standard  PAT, the induced waves are measured  on a surface  enclosing the  investigated object.
In the case of constant sound speed  and when the observation surface  exhibits a special geometry
(planar, cylindrical, spherical), initial pressure distribution can be recovered by closed-form inversion
formulas; see       \cite{AgrKucKun09,FilKunSey14,FinHalRak07,FinPatRak04,HalSchuSch05,Hal14,Hal14b,HalPer15b,Kow14,kuchment2014radon,Kun07a,Kun07b,KucKun08,XuWan05,XuWan06} and references therein. All these  algorithms assume that the acoustic pressure is known point-wise on a detection surface. Due to finite width of the commonly used piezoelectric elements this assumption is only approximately satisfied. Therefore,  the concept of integrating detectors has been invented as an alternative approach to PAT. Integrating detectors measure the integral of acoustic pressure over planes, lines or circles. Closed-form inversion formulas that incorporate integrated pressure data have been developed in \cite{BurBauGruHalPal07,haltmeier2004,paltauf2007,zangerl2009exact}.

Inspired by the concept of integrating  line detectors, a full field detection method that is capable to image the
whole  acoustic field around an object has been invented in \cite{nuster2010full, NusSle14}. In full field detection
PAT (FFD-PAT), a phase contrast method is used to obtain data in the form of
2D projections  of the pressure field at a time instant $T$.  If  the measurement time $T$
is sufficiently large, then the acoustic pressure has
essentially left the object.  As shown in \cite{nuster2010full,NusSle14}, in the case of constant sound speed,
projection data from different directions allow for a full 3D reconstruction of the initial pressure by
Radon or Fourier transform techniques.

Existing image reconstruction methods for FFD-PAT assume a constant speed of sound.
However, there are relevant cases when the assumption of constant speed of sound is inaccurate \cite{JinWan06, KuFoXu05}.
For example, it is known that acoustic properties vary within female human breasts. Consequently, for accurate image reconstruction, variable speed of sound  has to be incorporated in the wave propagation model.
Iterative methods are capable to deal with this assumption.
In the case of standard PAT, such methods have been studied in \cite{arridge2016adjoint,belhachmi2016direct,HalLinh17,huang2013full,nguyen18a,SteUhl09}. Therein the spatially variable speed of sound is assumed to be a smooth function and bounded from below.
Moreover, it is assumed to satisfy the so called nontrapping condition, which means that the supremum of the lengths of all geodesics connecting any two points inside the volume
enclosed by the measurement surface $S$ is finite. Under this assumption, it is known that the initial  pressure can be stably reconstructed from
pressure data restricted to $S \times [0,T]$ provided that the measurement time $T$ is sufficiently large.

In this paper, we study image reconstruction in FFD-PAT with a spatially variable speed of sound. We will give a precise mathematical formulation of FFD-PAT
and describe the inverse problem we are dealing with (see Section \ref{sec:model}).
For its solution we propose a two-step process. In the first step, the acoustic pressure at time $T$ is reconstructed pointwise from the full field data. In the second step, we recover the desired initial pressure from the pressure known
for fixed measurement time $T$.
The first step can be approximated by inverting the well-known Radon transform. The second step consists in a final time
wave inversion problem  with spatially varying speed of sound. To the best of our knowledge, the latter has not been addressed in the literature so far.
For its solution, we develop iterative reconstruction methods based on an explicit  computation of an adjoint problem. As main theoretical results, we establish uniqueness and stability of the final time wave inversion problem. In particular, this implies  linear convergence for the proposed iterative
reconstruction methods.

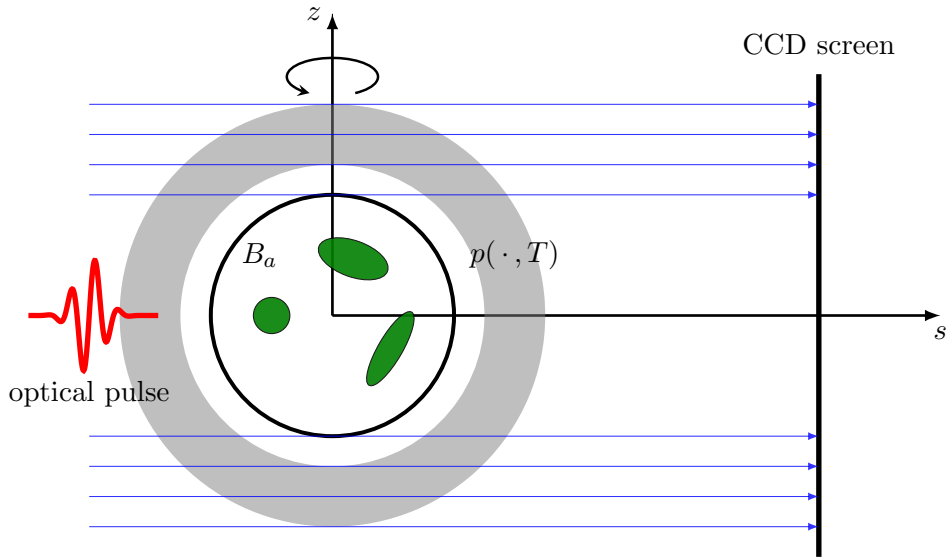
\begin{figure}[htb!]
	\centering
	\begin{tikzpicture}[scale=0.8]
	\draw[thick,->, black,-latex, line width = 1pt ] (0,0) -- (10,0) node[anchor=north]{$s$};
	\draw[thick,->,-latex, line width = 1pt] (0,0) -- (0,5)         node[anchor=east]{$z$};
	\draw (0,0) -- (0,5)         node at (0,4)  {\AxisRotator[rotate = 90 ]};
	\draw[line width = 2pt  ] (8,4) -- (8,-4)  ;
	\draw[line width = 1.5pt] (0, 0) circle (2);
	\draw[rotate = -20, fill = green!50!black, opacity = 0.9] (0,1.0) ellipse (0.6cm and 0.3cm);
	\draw[rotate = 0, fill = green!50!black, opacity = 0.9] (-1.0,0) ellipse (0.3cm and 0.3cm);
	\draw[rotate = -120, fill = green!50!black, opacity = 0.9] (0,1.1) ellipse (0.7cm and 0.2cm);
	\path[fill = gray, opacity = 0.5, even odd rule] (0,0) circle (2.5) -- (0,0) circle (3.5);
	\draw[->, -latex, color = blue, opacity = 0.8] (-4, 3.5)--(8, 3.5);
	\draw[->, -latex, color = blue, opacity = 0.8] (-4, 3)--(8, 3);
	\draw[->, -latex, color = blue, opacity = 0.8] (-4, 2.5)--(8, 2.5);
	\draw[->, -latex, color = blue, opacity = 0.8] (-4, 2)--(8, 2);
	\draw[->, -latex, color = blue, opacity = 0.8] (-4, -2)--(8, -2);
	\draw[->, -latex, color = blue, opacity = 0.8] (-4, -2.5)--(8, -2.5);
	\draw[->, -latex, color = blue, opacity = 0.8] (-4, -3)--(8, -3);
	\draw[->, -latex, color = blue, opacity = 0.8] (-4, -3.5)--(8, -3.5);
	\node at (8,4.5) {\text{CCD  screen} };
	\node at (3, 1) {$p(\edot,T)$ };
	\node at (-4,-1.3) {\text{optical pulse} };
	\node at (-1.2,1.0) {$B_a$};
	\draw[thick, red, line width = 2pt] plot[domain=-5*pi:-9, samples=250]  (\x/pi,{sin(5* \x r) * exp( -(\x + 4*pi)^2)});
	\end{tikzpicture}\caption{\textbf{Illustration of FFD-PAT with variable speed of sound.}
	An object is illuminated by a short pulse of electromagnetic radiation at $t=0$.
	After a sufficiently large time $T > 0$,  the acoustic pressure $p(\edot, T)$ has almost left the
	investigated object and linear projections of $p(\edot, T)$  along
	lines not intersecting $B_a$ and perpendicular to the CCD screen  are recorded.
	After that, the object is rotated around the $(0,0,1)$ axis
	and the measurement process is repeated.}\label{fig:FFD_setup}
\end{figure}

\section{Full field detection photoacoustic tomography}\label{sec:model}

In this section, we describe a mathematical model for FFD-PAT including variable
sound speed case, and state the inverse problem under consideration.
Additionally, we outline the proposed two-step reconstruction procedure
and formulate the final time inverse problem.

\subsection{Mathematical model}

In the case of variable sound speed, acoustic wave propagation in PAT is commonly described by the widely
accepted model~\cite{HalLinh17,SteUhl09,huang2013full, JinWan06}
\begin{align}\label{eq:wave1}
 p_{tt} (\f x, t ) - c^2(\f x) \Delta p(\f x, t) &= 0,  \, && \, \, (\f x, t) \in \R^3 \times \R_{> 0} \\ \label{eq:wave2}
p(\f x, 0 ) &= f(\f x),  \, \,          && \, \,    \f x  \in \R^3   \\ \label{eq:wave3}
p_t(\f x, 0)   &= 0,  \,       &&  \, \,  \f x  \in \R^3.
\end{align}
Here $c(\f x) > 0$ is the sound speed at location $\f x \in \R^3$, and $f \in C_0^{\infty}(\R^3 )$ is the  initial pressure distribution
that encodes the inner structure of the object. Throughout this text it is assumed that the object is contained inside
$B_a = \set{\f x \in \R^3 \mid \norm{\f x} <a}$,
the ball of radius $a$ centered at the origin and that the sound speed is smooth, positive and has the constant value $c_0$ outside $B_a$.

In FFD-PAT, linear projections (integrals along straight lines) of the 3D pressure field
$p(\edot, T)$  for a fixed time $T>0$ are  recorded; compare Figure \ref{fig:FFD_setup}.
This can be  implemented using a  special phase contrast  method  and a CCD-camera that
recorders full field projections of the pressure field \cite{nuster2010full,NusSle14}.
The projections are collected for rotation angles $\alpha \in [0, \pi]$
around the $e_3 =(0,0,1)$ axis and are given by
\begin{multline}\label{eq:dataFFDPAT}
g_a(\alpha ,s, z)
=  \int_{\R} p (s \cos(\alpha) - t \sin(\alpha) , s \sin(\alpha) + t \cos(\alpha), z , T) \, dt
\\
\text{for } (\alpha , s, z)\in M_a \coloneqq \set{(\alpha, s,z) \in [0, \pi] \times \R^2  \mid s^2+z^2 \geq R^2} \,.
\end{multline}
Here  $M_a $ determines the set of admissible projections, where the defining condition
$s^2 + z^2 \geq R^2$ means that in practice only pressure integrals over those lines are recorded,
which  do not intersect  the possible support of the imaged object.

\begin{figure}[htb!]
	\centering
	\tdplotsetmaincoords{70}{110}
	\begin{tikzpicture}[scale=4]
	\node at (-0.1,0.0) {$D_{a,z}$};
	\draw[fill = gray, opacity = 0.4] (0,0)  circle (0.2);
	\draw[fill = black] (0,0)  circle (0.01);
	\path[draw=none,fill=gray, fill opacity = 0.4,even odd rule]
	(0,0) circle (0.3)
	(0,0) circle (0.6);
	\draw[line width = 1.5pt] (0,0)  circle (0.2);
	\draw[blue, rotate = 15,  ] \foreach \y in {0.2, 0.3,...,0.7}
	{  (-0.8,\y)-- (0.8,\y) };
	\draw[blue, rotate = 15 ] \foreach \y in {-0.2,-0.3,...,-0.7}
	{(-0.8,\y)-- (0.8,\y) };
	\draw[blue, rotate = 15] \foreach \y in {-0.2,-0.3,...,-0.7}
	{(-0.8,\y)-- (0.8,\y) };
	\draw[blue, rotate = 15] \foreach \y in {0.2, 0.3,...,0.7}
	{  (-0.8,\y)-- (0.8,\y) };
	\draw[blue, rotate = -15] \foreach \y in {-0.2,-0.3,...,-0.7}
	{(-0.8,\y)-- (0.8,\y) };
	\draw[blue, rotate = -15] \foreach \y in {0.2, 0.3,...,0.7}
	{  (-0.8,\y)-- (0.8,\y) };
	\end{tikzpicture}\caption{By measuring full field  projections
	over lines  that do not intersect the ball $B_a$,
	for any plane $\R^2 \times \{z\}$, integrals of $p(\edot, T)$ over lines  all lines
	in $\R^2 \times \{z\}$ are measured that not intersect the disc
	$D_{a,z} \coloneqq  B_a \cap (\R^2 \times \{z\})$. For $\abs{z} > R$, this yields
	the exterior problem for the Radon transform.}\label{fig:ext_Problem}
\end{figure}
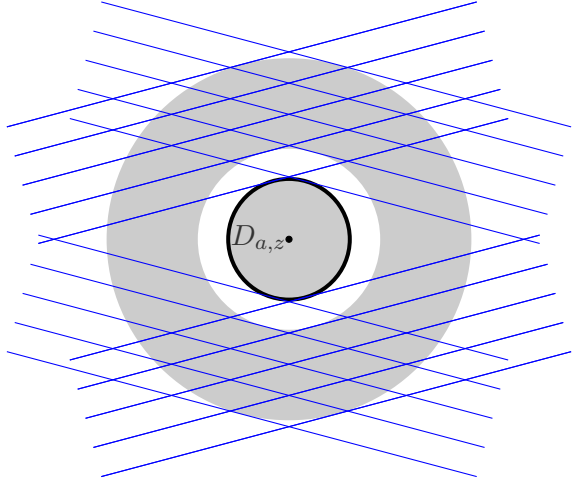

\subsection{Description of the inverse problem}

In order to describe the  inverse  problem of FFD-PAT in a more compact way we
introduce some further notation. First, we define the operator
\begin{equation}\label{eq:Wo}
\Ao \colon C_0^{\infty} (B_a ) \to C_0^{\infty} (\R^3) \colon
f \mapsto p(\edot,T)\,,
\end{equation}
where  $p$ denotes the solution of \eqref{eq:wave1}--\eqref{eq:wave3} with initial data $f$.
The operator  $\Ao$ maps the initial data $f$ to the solution (full field) of
the wave equation   \eqref{eq:wave1} at  the given measurement time $T>0$.
Second, we define the operator
\begin{equation}\label{eq:Radon}
\begin{aligned}
&\Rad \colon   C_0^\infty \left( \R^3 \right)  \rightarrow C^\infty_0\left([0, \pi] \times \R^2  \right)
\\
&(\Rad h)(\alpha, s, z ) \coloneqq
\int_{\R} h(s \cos(\alpha) - t \sin(\alpha) , s \sin(\alpha) + t \cos(\alpha), z ) \,dt
\end{aligned}
\end{equation}
Note that  for any fixed $z \in \R$, the function $(\Rad h)(\,\cdot\, , z )$ is the
Radon transform of $h (\,\cdot\,, z)$ in the horizontal plane $\R^2 \times \set{z}$.
Finally, we define the restricted Radon transform
\begin{equation}\label{eq:RadonR}
\Rad_a \colon   C_0^\infty \left(\R^3\right)  \rightarrow C^\infty\left(M_a \right)
\colon h \mapsto (\Rad  h)|_{M_a} \,,
\end{equation}
where $M_a$  is defined in \eqref{eq:dataFFDPAT}.
For $|z| \leq R$,   $(\Rad_a h)(\,\cdot\, , z )$  is the  exterior Radon transform  of
$h(\,\cdot\, , z )$ for lines not intersecting
$\set{(x,y) \in \R^2 \mid x^2+y^2 <  R^2-z^2}$; compare  Figure \ref{fig:ext_Problem}.
Otherwise,   $(\Rad_a h)(\,\cdot\, , z )$  coincides with the standard
Radon transform of $h(\,\cdot\, , z )$.

Using the operator notation introduced above we can  write the
inverse problem of  FFD-PAT in the  form
\begin{align}\label{eq:forward_ffd}
\text{Recover $f$ from data } \quad g_a =   \Rad_a  \Ao f  \,.
\end{align}
Evaluation of $ \Rad_a  \Ao f $ will be referred  to as the forward problem
in FFD-PAT. In this paper we study the  solution of the inverse
problem~\eqref{eq:forward_ffd}.

\subsection{Two stage reconstruction}\label{sec:2stage}

One possible approach to solve the inverse problem of    FFD-PAT
is to directly recover $f$ from  data in \eqref{eq:forward_ffd}  via iterative
methods. Typically, each iteration step will require the evaluation of $\Rad_a \Ao $ and
$( \Rad_a \Ao )^* = \Ao^* \Rad_a^*$. In this paper, we consider a two-step approach
where we first invert $ \Rad_a  $ via direct method
and then use an iterative method to  invert $\Ao$. This avoids  repeated and
time consuming evaluation of  $\Rad_a$ and its adjoint.

The proposed two stage reconstruction method  consists of the following:

\begin{itemize}
\item \textbf{Inverse Radon  transform:}
In this first reconstruction step, assume that projection data $g_a = \Rad_a \Ao f$ are given.
Assuming $T>0$ to be sufficiently large,   we  consider
the extension $g \colon [0, \pi] \times \R^2   \to \R$ by
$g(\alpha, x, z ) = g_a(\alpha, x, z )$ for $(\alpha, x, z ) \in M_a$
and $g(\alpha, x, z ) = 0$ otherwise.
We then define an  approximation  to $\Ao f$
by applying an inversion formula  of the  Radon
transform in  planes $\R^2 \times \{z\}$. 
Here we use the well-known filtered backprojection
formula (see \cite{Nat86}) which yields
\begin{equation*}
\Ao f  (x,y,z) \simeq  \Rad^\sharp g (x,y,z) \coloneqq  \frac{1}{2 \pi^2}
 \int_{0}^{\pi}  \mathrm{P.V.}\!\! \int_{\R} \frac{\kl{ \partial_s  g} \skl{\alpha, s, z} d s}{(x \cos(\alpha) +
 y \sin(\alpha))-s}
  d\alpha \,,
 \end{equation*}
where $\mathrm{P.V.}$ denotes the principal  value  integral.

\item  \textbf{Final time wave inversion:}
For the  second step we assume that an approximation
$h \simeq \Ao f$ to the 3D acoustic field at time $T$ is given.
This yields the final time wave inversion problem
\begin{equation} \label{eq:waveP}
\text{Recover $f$ from data } \quad  \Ao f = h.
\end{equation}
To the best of our knowledge, the problem has not been considered so far and its investigation
will be the main theoretical  focus of this work.
\end{itemize}

For solving the wave inversion problem  (second step), we
propose iterative solution  methods that are described in detail  in Section \ref{sec:MainResults}. Additionally, in Section~\ref{app:well} we derive
uniqueness and stability results  for \eqref{eq:waveP}.

Another option for solving the first step would be to work with the exterior Radon transform   \cite{Quin83,Quin88,kuchment2014radon}. However, we work with the standard Radon transform  after
replacing the missing values of $\Rad \Ao f$ by zero, since they are approximately zero for large enough $T$. Theoretically, the smallness is supported by the
following two facts. First,      in the case of non-trapping sound speed the
the known decay  estimate for the solution of
\eqref{eq:wave1} states that the following.

\begin{lemma}[Decay estimate \cite{Vain75}] \label{lem:decay}
Assume that the sound speed $c$ is non-trapping and the initial data  $f $ is supported in $B_a$.
 Then, for any $(k,m) \in \N^2$, the solution  $p$   of \eqref{eq:wave1}--\eqref{eq:wave3} satisfies
	\begin{equation}\label{ieq:decay}
	\left|  \frac{ \partial^{k+ |m|} p(\f x, t) }{\partial_t^k \partial_{\f x}^{|m|}   } \right| \leq C e^{-\delta t} \norm{f}_2 \quad
	\text{ for } (\f x,t)  \in B_a \times ( T, \infty) \,.
	\end{equation}
 Here $\delta > 0$ is a constant only depending  $c$  and $T$, and  $C$ is a constant
 depending  on the domain $B_a$.
 \end{lemma}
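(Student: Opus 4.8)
Estimate \eqref{ieq:decay} is Vainberg's local energy decay theorem, and the approach I would take is the resolvent (spectral) one --- a direct Morawetz multiplier argument in the spirit of Lax--Phillips and Morawetz--Ralston--Strauss being an alternative. Taking the Fourier--Laplace transform in time in \eqref{eq:wave1}--\eqref{eq:wave3} gives, for $\operatorname{Im}\lambda>0$,
\[
\widehat p(\f x,\lambda)\coloneqq\int_0^\infty e^{\imi\lambda t}\,p(\f x,t)\,dt=-\imi\lambda\,R(\lambda)f,\qquad R(\lambda)\coloneqq(-c^2\Delta-\lambda^2)^{-1},
\]
where $R(\lambda)$ is the outgoing resolvent; $p$ is then recovered from $\widehat p$ by Fourier inversion along a horizontal line $\operatorname{Im}\lambda=\sigma>0$. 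Since $f$ is supported in $B_a$ and only the values of $p$ inside $B_a$ are wanted, everything is governed by the cut-off resolvent $\chi R(\lambda)\chi$, where $\chi\in C_0^\infty(\R^3)$ equals $1$ on a neighbourhood of $B_a$. The plan has two parts: (i) continue $\chi R(\lambda)\chi$ holomorphically into a strip below the real axis, with polynomial-in-$\lambda$ bounds; (ii) turn this into exponential decay in $t$ of $p$ and its derivatives on $B_a$.

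Part (i) is the analytic heart of the statement, and the step I expect to be the main obstacle. It asserts that the non-trapping hypothesis forces $\chi R(\lambda)\chi$ --- a priori holomorphic only for $\operatorname{Im}\lambda>0$ --- to extend holomorphically, with no poles (resonances), to a strip $\{\operatorname{Im}\lambda>-\delta\}$ for some $\delta=\delta(c)>0$, and there to obey uniform bounds $\norm{\chi R(\lambda)\chi}_{L^2\to H^s}\le C_s\langle\lambda\rangle^{s-1}$ for each $s\ge0$. I would prove this by gluing two constructions. Outside $B_a$ the sound speed equals the constant $c_0$, so the relevant resolvent is the explicit free resolvent of $-c_0^2\Delta$ on $\R^3$, which is entire in $\lambda$ and, cut off by compactly supported functions, obeys precisely the asserted bounds. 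Near $B_a$ one builds a parametrix and controls the error with a Morawetz (escape-function) multiplier, equivalently via propagation of singularities; this is exactly where non-trapping enters, since a trapped geodesic would produce resonances accumulating on the real axis and ruin the uniform bound. A resolvent identity combines the interior and exterior pieces. Here one also uses that $c$ is smooth, positive, and equal to $c_0$ outside $B_a$, as in Section~\ref{sec:model}.

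Granting (i), part (ii) is routine. Deforming the Fourier-inversion contour from $\operatorname{Im}\lambda=\sigma$ down into the strip, to $\operatorname{Im}\lambda=-\delta'$ with $0<\delta'<\delta$, produces the factor $\sabs{e^{-\imi\lambda t}}=e^{-\delta' t}$; to make the deformed integral converge one works not with $p$ itself but with a sufficiently high time-antiderivative of it --- for instance $W(\f x,t)\coloneqq\int_0^t\!\int_0^s p(\f x,r)\,dr\,ds$, whose transform $\tfrac{\imi}{\lambda}R(\lambda)f$ decays two extra powers in $\lambda$ (with only a harmless, time-independent residue at $\lambda=0$) --- together with a Lax--Phillips truncated-propagator / Tauberian argument. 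Applied to $W$, whose initial energy is comparable to $\norm{f}_2^2$, this gives the local energy decay estimate $\norm{p(\edot,t)}_{L^2(B_a)}\le Ce^{-\delta' t}\norm{f}_2$. Finally, for the derivatives in \eqref{ieq:decay}: when $t>T$ the non-trapping condition forces, via propagation of singularities, the space-localized propagator $\chi\cos\!\bigl(t\sqrt{-c^2\Delta}\bigr)\chi$ --- and likewise its $t$-derivatives --- to be smoothing of infinite order, with $L^2\to H^N$ norms bounded uniformly in $t>T$; interpolating these uniform bounds against the $L^2$-decay just obtained yields exponential decay of every local Sobolev norm of $p$ and of its derivatives, and the Sobolev embedding $H^N(B_a)\hookrightarrow C^{k+|m|}(B_a)$ converts this to the pointwise estimate on $\partial_t^k\partial_{\f x}^{|m|}p$. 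Relabelling $\delta'$ as $\delta$ finishes the proof. The substance of the theorem is the resonance-free-strip estimate of part (i); everything in part (ii) is standard transform manipulation and microlocal bookkeeping.
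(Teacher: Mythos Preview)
The paper does not prove Lemma~\ref{lem:decay} at all: it is stated with a citation to Vainberg~\cite{Vain75} and used as a black box (in the proof of Proposition~\ref{P:unique}). So there is nothing to compare your argument against on the paper's side.

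That said, your outline is a faithful sketch of one of the standard routes to Vainberg's local energy decay --- the resolvent/stationary approach via a resonance-free strip for the cut-off resolvent $\chi R(\lambda)\chi$, followed by contour deformation and a smoothing/propagation-of-singularities upgrade to handle derivatives. The identification of part~(i) as the substantive step is correct: the non-trapping hypothesis is exactly what excludes resonances near the real axis and yields the polynomial bounds in the strip, and everything in part~(ii) is indeed routine once (i) is in hand. Two minor remarks. First, in part~(ii) you should be explicit that the uniform $L^2\to H^N$ smoothing of the localized propagator only kicks in once $t$ exceeds the sup of geodesic travel times through the support of $\chi$; the lemma's restriction to $t>T$ is what accommodates this, and the dependence of the constants on $T$ reflects it. Second, your interior parametrix in part~(i) is really the semiclassical non-trapping resolvent estimate (high-frequency $|\lambda|\to\infty$), while the low-frequency behaviour is controlled by the free resolvent plus compactness; it is worth separating these two regimes, since the gluing argument is different in each. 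None of this is a gap --- just places where a full write-up would need more than a sentence.
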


 Second, in the case of constant  sound speed, the  Radon transform $\Rad$ reduces the  initial value problem
 \eqref{eq:wave1} to a two dimensional wave equation with initial data $\Rad f$ which is
 supported in a disc of radius $a$. As the sound speed is  assumed to be constant
outside of $B_a$ in the constant sound speed  case, $\Rad \Ao f$  rapidly decays in
the complement of $M_a$.  For  non-trapping sound speed we numerically observed the same behaviour.
Theoretically investigating this issue, however,  is an open problem.

\section{Final time wave inversion problem}\label{sec:MainResults}

In this section  we study the final time wave inversion problem~\eqref{eq:waveP},
where  the forward operator $\Ao \colon f \mapsto p(\edot, T)$  is defined in \eqref{eq:Wo}.
According to standard results for the wave equation \cite{Treves2} the forward  operator extends to a bounded linear operator $\Ao \colon  L^2(B_a) \to L^2(\R^3)$. Below we establish uniqueness and stability results and derive an iterative reconstruction algorithm  using the CG method.

For constant sound speed, recovering the function $f$ from the solution at time $T$ of \eqref{eq:wave1}
with initial data $(0,f)$ instead of $(f,0)$  is equivalent to the the inversion from spherical means at
fixed radius. Uniqueness and in inversion method for this  problem has been obtained in the classical book
of Fritz John \cite{Joh82}. Neither for that case of initial data $(f, 0)$ nor in the variable sound speed case we are not aware of related results.

 \subsection{Uniqueness and stability}

The following theorem is the main theoretical result of this
paper and states that  the final time wave inversion problem~\eqref{eq:waveP}
has a unique solution that stably depends on the right-hand side.

\begin{theorem} \label{T:stab}
The operator $\Ao \colon  L^2(B_a) \to L^2(\R^3)$ is injective and bounded from below.
\end{theorem}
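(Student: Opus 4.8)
The statement asks for injectivity and a lower bound $\|\Ao f\|_{L^2(\R^3)} \geq c\|f\|_{L^2(B_a)}$ for the map sending $f$ to $p(\cdot,T)$, where $p$ solves the wave equation with Cauchy data $(f,0)$. The natural strategy is energy conservation plus a time-reversal / observability argument. Since $c$ equals the constant $c_0$ outside $B_a$ and $f$ is compactly supported in $B_a$, the solution $p$ has finite speed of propagation, and the total acoustic energy $E(t) = \tfrac12\int_{\R^3}\big(c^{-2}(\f x)\,|p_t(\f x,t)|^2 + |\nabla p(\f x,t)|^2\big)\,d\f x$ is conserved. At $t=0$ this energy equals $\tfrac12\int_{\R^3}|\nabla f|^2$, i.e. it controls the homogeneous $\dot H^1$ norm of $f$, not the $L^2$ norm — so a direct energy identity gives a lower bound in the wrong norm. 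This is the first thing I would address.

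Let me describe the argument more carefully. I would introduce the second "flow" data $(0,f)$ and let $w$ solve the wave equation with Cauchy data $(w,w_t)|_{t=0} = (0,f)$; then $u \coloneqq \partial_t w$ is easily seen (by differentiating the PDE in $t$ and using $w_{tt}|_{t=0} = c^2\Delta w|_{t=0} = 0$) to solve the same equation with Cauchy data $(f,0)$, hence $u = p$ and in particular $p(\cdot,T) = \partial_t w(\cdot,T)$. The point of this is that recovering $f$ from $w(\cdot,T)$, resp. $w_t(\cdot,T)$, is the kind of "final-time" observability problem for which one has exact controllability / observability under the non-trapping geometric control condition (Bardos–Lebeau–Rauch), or alternatively one can invoke the known stability of the PAT forward operator $f\mapsto p|_{S\times[0,T]}$ for large $T$ (as recalled in the introduction) together with the fact that the full 3D field at time $T$, via finite speed of propagation and the constancy of $c$ near and outside $B_a$, determines (through an explicit free-space wave solve outwards in time) the boundary traces on $S\times[T,\infty)$; and by the decay estimate (Lemma \ref{lem:decay}) the energy that still sits inside $B_a$ at time $T$ is exponentially small, so essentially all the information about $f$ has already passed through $S$ by time $T$. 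Either route reduces Theorem \ref{T:stab} to a known observability/stability estimate.

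Concretely I would carry out the following steps. First, reduce to the pair $(w, p) = (w, w_t)$ with $w$ having Cauchy data $(0,f)$, so that the datum $h = \Ao f = w_t(\cdot, T)$ together with the PDE lets one also read off $w(\cdot,T)$ inside $B_a$ up to the slowly-varying-in-space part — more precisely, $c^{-2}w_{tt} = \Delta w$, so knowing $w_t$ at time $T$ on all of $\R^3$ and integrating/differentiating gives control of a full "energy pair" $(w,w_t)$ at time $T$ modulo harmonic ambiguities, which are killed by compact support. Second, run the wave equation backward from $t=T$ to $t=0$ (time reversal is well-posed since the equation is time-reversible), picking up $f$; the map $(w,w_t)|_{t=T}\mapsto (0,f)|_{t=0}$ is bounded on the energy space, giving a lower bound $\|\Ao f\|_{\text{energy at }T}\gtrsim \|f\|_{\dot H^{-1}}$ or $\|f\|_{L^2}$ depending on which component one tracks. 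Third, upgrade the norm: because $w$ has Cauchy data $(0,f)$ with $f\in L^2(B_a)$, the "velocity" component already lives at $L^2$ regularity, so tracking $w_t$ (rather than $\nabla w$) is exactly what yields an $L^2(B_a)$ lower bound for $f$; one combines a finite-speed-of-propagation cutoff (the solution at time $T$ is supported in a fixed ball $B_{a + c_0 T}$) with the conservation/observability identity to get $\|p(\cdot,T)\|_{L^2(\R^3)} = \|w_t(\cdot,T)\|_{L^2(\R^3)} \geq c\,\|f\|_{L^2(B_a)}$. Injectivity is then immediate from the lower bound.

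**Main obstacle.** The delicate point is the norm mismatch: plain energy conservation for Cauchy data $(f,0)$ controls $\|\nabla f\|_{L^2}$, which neither dominates nor is dominated by $\|f\|_{L^2}$ on $L^2(B_a)$, so one cannot prove the theorem by a one-line energy identity. The fix — passing to $w$ with data $(0,f)$ so that $f$ appears as a velocity and $p(\cdot,T) = w_t(\cdot,T)$ — is clean, but then one must justify that the single function $w_t(\cdot,T)$ (not the pair $(w,w_t)(\cdot,T)$) already determines $f$ with a quantitative bound; this is where a genuine input is needed, either an observability estimate under the non-trapping hypothesis or a reduction to the already-established boundary stability of PAT via an outgoing free-space continuation of the field from time $T$ onward (legitimate because $c\equiv c_0$ outside $B_a$ and, by Lemma \ref{lem:decay}, the residual interior energy at time $T$ is exponentially small for $T$ large). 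Making that reduction rigorous — in particular handling the interplay between "$T$ large" (needed for the decay/observability) and the fact that the theorem is stated for a fixed $T$ — is the part of the proof I expect to require the most care.
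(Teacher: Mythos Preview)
Your plan identifies the right difficulty but does not resolve it, and the proposed fixes do not work.

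The crux is that you only know the single function $p(\cdot,T)$, not the Cauchy pair $(p,p_t)(\cdot,T)$. Time reversal of the wave equation, observability \`a la Bardos--Lebeau--Rauch, and the reduction to boundary PAT all require both pieces of Cauchy data at time $T$: to solve the free wave equation outward from $t=T$ you need $p_t(\cdot,T)$ as well, and this is exactly the quantity you are missing. Your suggestion that ``knowing $w_t$ at time $T$ on all of $\R^3$ and integrating/differentiating gives control of a full energy pair $(w,w_t)$ at time $T$'' is not correct: a single time slice of $w_t$ carries no information about $w(\cdot,T)$ beyond the constraint $p_t(\cdot,T)=c^2\Delta w(\cdot,T)$, which goes the wrong way. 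Nor does energy conservation for $w$ with data $(0,f)$ help: it yields $\int c^{-2}|f|^2=\int c^{-2}|w_t(\cdot,T)|^2+\int|\nabla w(\cdot,T)|^2$, which gives only an \emph{upper} bound for $\|\Ao f\|$, not a lower one. Finally, the theorem is stated for a fixed $T>0$, so any argument leaning on ``$T$ large'' would at best prove a weaker statement.

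The paper proceeds quite differently and never attempts to reconstruct the missing Cauchy component. Injectivity is proved by a reflection trick: if $p(\cdot,T)=0$, one extends $p$ by odd reflection in $t$ through $t=T$ (which matches $C^1$ precisely because $p(\cdot,T)=0$), then by even reflection through $t=2T$, and then $4T$-periodically; the resulting global solution $\bar p$ satisfies $f=\bar p(\cdot,4^nT)$ for all $n$, and the non-trapping decay estimate forces $f=0$. The lower bound is obtained microlocally: writing the parametrix $\Ao=\Ao_++\Ao_-$ as a sum of two FIOs associated to the forward and backward geodesic flows, one shows that the cross terms $\Ao_\mp^*\Ao_\pm$ are smoothing on $B_a$ (their canonical relations move points by geodesic distance $2T$, hence out of $B_a$), while each $\Ao_\pm^*\Ao_\pm$ is a pseudodifferential operator with principal symbol $\tfrac14$. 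Thus $\Ao^*\Ao=\tfrac12 I+K$ with $K$ of order $\leq -1$, giving $\|f\|^2\le 2(\|\Ao f\|^2+\|Kf\|^2)$; compactness of $K$ together with the injectivity already proved then yields the stated lower bound via the standard compact-perturbation argument.
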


The proof  of Theorem \ref{T:stab} is presented in Appendix \ref{app:well}.
It states that
\begin{equation} \label{eq:b}
b \coloneqq  \inf \left\{ \frac{\norm{\Ao f}_{L^2(\R^3)}}{\norm{f}_{L^2(B_a)}} \; \bigg| \; f \in L^2(B_a) \right\} > 0 \,.
\end{equation}
In particular, $\Ao  \colon  L^2(B_a) \to R(\Ao) $ has a bounded
inverse, where  $R(\Ao)$ denotes the range of $\Ao$.
The latter result implies that standard  iterative methods for
 \eqref{eq:waveP}  converge linearly, similar to the case of standard PAT
 \cite{HalLinh17}.

\subsection{Solution by the CG method}

To find a solution of \eqref{eq:waveP} we use the conjugate gradient (CG) method  applied to the
normal equation $\Ao^* \Ao f =  \Ao^* h $.
The CG method has proven to be an accurate and fast reconstruction method for
the PAT  with variable sound speed~\cite{HalLinh17}.
Our numerical experiments confirm that the CG method is also efficient for  FFD-PAT, where it reads as
follows.

\begin{framed}
\begin{alg}[CG method for  \label{alg:cg} FFD-PAT]\mbox{}\vspace{-1em}
	\begin{enumerate}[label=(S\arabic*), itemsep=0em]
		\item  Initialize: $k = 0$, $r_0 = h - \Ao  f_0$, $d_0 = \Ao^* r_0$
		\item  While (not stop) do
		\begin{itemize}
      \item $\alpha_k = \norm{\Ao^* r_k}^2/ \norm{\Ao d_k}^2  $
		\item $  f_{k+1}  = f_k + \alpha_k d_k  $
		  \item $   r_{k+1}  = r_k - \alpha_k  \Ao d_k $
		  \item $  \beta_{k} = \norm{\Ao^* r_{k+1}}^2/\norm{\Ao^* r_k}^2  $
		  \item $  d_{k+1}   = \Ao^* r_{k+1} + \beta_k d_k$.
		\end{itemize}
		\end{enumerate}	
\end{alg}	
\end{framed}	

Using the injectivity and boundedness of $ \Ao $, Algorithm~\ref{alg:cg} generates a series of iterates
$f_k$ that converge to the unique solution  of the inverse source problem \eqref{eq:waveP}.
The stability of \eqref{eq:waveP} even implies that the CG method for FFD-PAT converges linearly.
More precisely, the sequence $(f_k)_{k \in \N}$ generated by Algorithm~\ref{alg:cg} satisfies
the estimate (see \cite{daniel1967conjugate})
\begin{equation*}
\forall k \in \N  \quad
\norm{f_{k}- f}_2 \leq 2\,  \frac{\norm{\Ao}}{b} \, \left( \frac{\norm{\Ao} -b}{\norm{\Ao}+b} \right)^k \,\norm{f_0 -f}_2^2 \,,
\end{equation*}
where $b$ is defined in \eqref{eq:b}.

 \subsection{The adjoint operator}

The CG method requires knowledge of the adjoint operator $\Ao^* \colon L^2(\R^3) \to  L^2(B_a) $ of $\Ao$.
We show that the adjoint operator  is again  determined by the solution of a wave equation.
More precisely, we have the following result:

\begin{theorem} \label{thm:dual}
Let $g \in C_0^\infty(\R^3)$, consider the  time reversed final state
problem for the wave equation,
\begin{alignat}{2}\label{eq:dual}
 q_{tt}(\f x, t)  - c(\f x)^2  \Delta q(\f x, t ) &= 0,   && \quad (\f x, t) \in \R^3 \times (-\infty,T)  \nonumber \\
 q(\f x, T )       &= g(\f x),      && \quad  \f x \in \R^3      \\
 q_t(\f x, T)    &= 0        	&& \quad  \f x  \in \R^3 \,,\nonumber
\end{alignat}
and let $\chi_{B_a} $ denote the indicator function of  $B_a$. Then,
\begin{equation}
\Ao^*  g =  \chi_{B_a}(\edot) q(\edot, 0)  \,.
\end{equation}
\end{theorem}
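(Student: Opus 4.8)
The plan is to verify the defining adjoint identity $\inner{\Ao f}{g} = \inner{f}{\chi_{B_a} q(\edot,0)}$ for every $f\in L^2(B_a)$, where both $L^2(\R^3)$ and $L^2(B_a)$ carry the energy inner product $\inner{u}{v} = \int c(\f x)^{-2} u(\f x) v(\f x)\,d\f x$ (equivalent to the standard one since $c$ is bounded between positive constants, equal to it where $c$ is constant, and the one making $-c^2\Delta$ self-adjoint). Because $\Ao$ is bounded and $\chi_{B_a} q(\edot,0)$ is a fixed, smooth, compactly supported function, both sides are continuous in $f$, so by density it suffices to take $f\in C_0^\infty(B_a)$; the hypothesis already gives $g\in C_0^\infty(\R^3)$. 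For such data, finite speed of propagation — recall $c\equiv c_0$ outside $B_a$, so the region of influence of $B_a$ up to time $t$ lies in the ball of radius $a+c_0 t$ — makes the forward solution $p$ of \eqref{eq:wave1}--\eqref{eq:wave3} and the backward solution $q$ of \eqref{eq:dual} smooth and, for each time, compactly supported in $\f x$, so that every integration by parts below produces no boundary term.

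The device I would use is the time primitive $P(\f x,t)\coloneqq \int_0^t p(\f x,\tau)\,d\tau$ of the forward solution. Since $\Delta P(\f x,t) = \int_0^t \Delta p(\f x,\tau)\,d\tau = c(\f x)^{-2}\int_0^t p_{\tau\tau}(\f x,\tau)\,d\tau = c(\f x)^{-2} p_t(\f x,t)$, where the last equality uses $p_t(\edot,0)=0$, the function $P$ is itself a solution of $P_{tt}=c^2\Delta P$, now with Cauchy data $P(\edot,0)=0$, $P_t(\edot,0)=f$. I would then introduce the reciprocity functional
\[
W(t) \coloneqq \int_{\R^3} c(\f x)^{-2}\bigl(P_t(\f x,t)\,q(\f x,t) - P(\f x,t)\,q_t(\f x,t)\bigr)\,d\f x .
\]
Differentiating, the cross terms cancel and $W'(t) = \int c^{-2}(P_{tt}q - P q_{tt})\,d\f x = \int(\Delta P\, q - P\,\Delta q)\,d\f x = 0$ by Green's identity, so $W$ is constant. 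Evaluating at the endpoints, using $P(\edot,0)=0$, $P_t(\edot,0)=f$ at $t=0$ and $P_t(\edot,T)=p(\edot,T)=\Ao f$, $q(\edot,T)=g$, $q_t(\edot,T)=0$ at $t=T$, gives
\[
\int_{\R^3} c^{-2} f\, q(\edot,0)\,d\f x = W(0) = W(T) = \int_{\R^3} c^{-2} (\Ao f)\, g\,d\f x ,
\]
and since $\supp f\subset B_a$ the left-hand side equals $\inner{f}{\chi_{B_a} q(\edot,0)}$, which is exactly the claim.

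The steps needing care are all bookkeeping: (i) the finite speed of propagation statement that makes $W(t)$ an integral over a compact set, legitimizing both differentiation under the integral and the vanishing of the Green boundary terms; (ii) the verification that the primitive $P$ solves the wave equation, which is precisely where $p_t(\edot,0)=0$ is used; and (iii) the density extension from $C_0^\infty(B_a)$ to $L^2(B_a)$. I do not expect the non-trapping hypothesis of Lemma~\ref{lem:decay} to play any role here — the formula should hold for any smooth positive $c$ equal to $c_0$ outside $B_a$. A more operator-theoretic variant of the same argument: with $A\coloneqq -c^2\Delta$ self-adjoint and nonnegative on $\mathcal{H}\coloneqq L^2(\R^3;c^{-2}d\f x)$, one has $\Ao f = \cos(T\sqrt A)(\iota f)$ for $\iota$ the extension by zero $L^2(B_a)\hookrightarrow\mathcal{H}$, while the solution of \eqref{eq:dual} is $q(\edot,t)=\cos((T-t)\sqrt A)g$, so $q(\edot,0)=\cos(T\sqrt A)g$; since $\iota^*$ is restriction to $B_a$ and $\cos(T\sqrt A)$ is a bounded self-adjoint operator, $\Ao^* g = \iota^*\cos(T\sqrt A)g = \chi_{B_a} q(\edot,0)$, the only subtlety being the functional calculus and the identification of $\cos((T-t)\sqrt A)g$ as the classical solution of \eqref{eq:dual}.
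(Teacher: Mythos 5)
Your main argument is correct and is essentially the paper's own proof in mirror image: the paper runs the same Green's-identity/duality computation after replacing the backward solution $q$ by a time antiderivative $u$ with final data $(0,g)$ (so $u_t=q$), while you replace the forward solution $p$ by its time primitive $P$ with initial data $(0,f)$ (so $P_t=p$); in both cases the antiderivative absorbs the mismatch between the Cauchy data $(f,0)$ and $(g,0)$, and your conserved Wronskian $W$ is precisely the boundary term produced by the paper's two integrations by parts. You are also right --- and more explicit than the paper --- that the resulting adjoint is the one for the $c^{-2}\,d\f x$-weighted inner products on both spaces, and that non-trapping plays no role here. Your operator-calculus variant ($\Ao=\cos(T\sqrt{A})\,\iota$ with $A=-c^{2}\Delta$ self-adjoint on $L^{2}(\R^{3};c^{-2}d\f x)$, hence $\Ao^{*}=\iota^{*}\cos(T\sqrt{A})$ and $q(\edot,0)=\cos(T\sqrt{A})g$) is a genuinely different and shorter route not taken in the paper; it trades the bookkeeping about finite propagation speed and vanishing boundary terms for an appeal to the spectral theorem and the identification of $\cos((T-t)\sqrt{A})g$ with the classical solution of \eqref{eq:dual}.
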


\begin{proof}
It is clearly sufficient to show $\Ao^*  g =  \chi_{B_a} u_t(\edot, 0) $, where
$u$ solves the wave equation $u_{tt}(\f x, t)  - c(\f x)^2  \Delta c(\f x, t ) = 0$ on $\R^3 \times (-\infty,T) $,
with the final state given by  $( u(\edot, T ), u_t(\edot, T ) )  = (0, f)$.
Using the weak formulation (similar to \cite{HalLinh17})  for the wave equation shows that for every
$v \in C_0^\infty(\R^3)$ we have
\begin{equation*}
	\int_0^T  \int_{\R^3} \frac{1}{c^{2}(\f x)} u_{tt}(\f x, t)  v(\f x, t)  \, d\f x dt  + \int_0^T \int_{\R^3} \nabla  u(\f x, t)
\cdot \nabla v \left(\f x , t \right) \, d \f x dt  = 0 \,.
	\end{equation*}
Two times integration by parts, rearranging terms and using the final state conditions
 $( u(\edot, T ), u_t(\edot, T ) )  = (0, f)$ yields
\begin{multline*}
\int_{\R^3} \frac{1}{c^{2}(\f x)}  \left[  f(\f x) v(\f x, T)
  - u_t(\f x, 0) v (\f x, 0)   + u (\f x, 0) v_t(\f x, 0) \right] \, d\f x
\\ =
\int _{0}^{T}  \int_{\R^3} u(\f x, t) \left[ c^{-2}(\f x)   v_{tt}(\f x, t) - \Delta v(\f x, t)  \right] d \f x dt .
\end{multline*}
By taking  $v $ as the solution of \eqref{eq:wave1}--\eqref{eq:wave3}  this yields
\begin{equation*}
\int_{\R^3}  \frac{1}{c^{2}(\f x)} g(\f x) \,  \Ao(f)(\f x)  d \f x =  \int_{\R^3}  \frac{1}{c^{2}(\f x)} u_t(\f x, 0) f(\f x) d\f x.
\end{equation*}
This implies $\Ao^*  g = \chi_{B_a} u_t (\edot, 0)  = \chi_{B_a} q(\edot, 0)$ and completes the proof.
\end{proof}

We can reformulate the  adjoint operator as follows
\begin{corollary} \label{cor:dual}
For  $g \in C_0^\infty(\R^3)$, let $q$ be the solution of
\begin{alignat}{2}\label{eq:dual2}
 q_{tt}(\f x, t)  - c(\f x)^2  \Delta q(\f x, t ) &= 0,   && \quad (\f x, t) \in \R^3 \times (0,\infty)  \nonumber \\
 q(\f x, 0 )       &= g(\f x),      && \quad  \f x \in \R^3      \\
 q_t(\f x, 0)     &= 0        	&& \quad  \f x  \in \R^3 \,.\nonumber
\end{alignat}
Then $\Ao^*  g =  \chi_{B_a}(\edot) q(\edot, T)$.
\end{corollary}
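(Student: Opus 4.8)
The plan is to deduce Corollary \ref{cor:dual} from Theorem \ref{thm:dual} by exploiting the time-reversal and time-translation invariance of the wave equation with time-independent sound speed, without redoing any integration by parts. First I would take $\tilde q$ to be the solution of the time reversed final state problem \eqref{eq:dual} appearing in Theorem \ref{thm:dual}, so that by that theorem $\Ao^* g = \chi_{B_a}(\edot)\,\tilde q(\edot, 0)$. Then I would set $q(\f x, t) \coloneqq \tilde q(\f x, T - t)$ and check directly that $q$ solves the initial value problem \eqref{eq:dual2}: the substitution $t \mapsto T - t$ introduces a factor $(-1)^2 = 1$ in the second time derivative and leaves the spatial Laplacian untouched, so $q_{tt} - c^2 \Delta q = 0$ continues to hold, now on $\R^3 \times (0, \infty)$ (since $T - t < T$ precisely when $t > 0$); moreover $q(\edot, 0) = \tilde q(\edot, T) = g$ and $q_t(\edot, 0) = -\tilde q_t(\edot, T) = 0$.

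The second step is then immediate: one has $q(\edot, T) = \tilde q(\edot, 0)$, hence $\chi_{B_a}(\edot)\, q(\edot, T) = \chi_{B_a}(\edot)\, \tilde q(\edot, 0) = \Ao^* g$ by Theorem \ref{thm:dual}, which is exactly the claimed identity. To make this airtight I would note that both \eqref{eq:dual} and \eqref{eq:dual2} are well posed by the standard theory for the wave equation with smooth, positive sound speed that is constant outside $B_a$, so the solutions being referred to are unique; this is the point where one genuinely uses that $c$ does not depend on $t$ (both autonomy, for the translation, and the absence of a first-order time derivative, for the reflection).

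The only aspect requiring a little care — and what I would regard as the main, though mild, obstacle — is the function-space bookkeeping. Theorem \ref{thm:dual} is phrased for $g \in C_0^\infty(\R^3)$ and argued at the level of classical/weak solutions, so I would verify that the reflected solution $q$ of \eqref{eq:dual2} lies in the same regularity class as $\tilde q$ and that the identity $\Ao^* g = \chi_{B_a}(\edot)\, q(\edot, T)$ then extends from $C_0^\infty(\R^3)$ to all of $L^2(\R^3)$ by density, using boundedness of $\Ao^* \colon L^2(\R^3) \to L^2(B_a)$ together with the continuous dependence of $q(\edot, T)$ on the data $g$ in $L^2$. No new estimate beyond those already available for the wave equation is needed.
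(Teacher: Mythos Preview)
Your argument is correct and is essentially the same as the paper's: both reduce the corollary to Theorem~\ref{thm:dual} via the time reflection $t \mapsto T-t$, using that the wave equation has no first-order time term and time-independent sound speed. The additional remarks on well-posedness and density extension to $L^2$ are fine but go slightly beyond what the paper records.
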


\begin{proof}
Clearly  $q$ solves \eqref{eq:dual2} if and only if $(x,t) \mapsto  q(x, T-t )$ solves  \eqref{eq:dual}.
Therefore the  claim follows from Theorem \ref{thm:dual}.
\end{proof}

\section{Numerical experiments}\label{sec:num}

In this section we present details on the implementation of
CG method (Algorithm~\ref{alg:cg}) for FFD-PAT, where the forward operator $\Ao$  and its adjoint
$\Ao^*$  given by the solution of \eqref{eq:waveP} and \eqref{eq:dual}, respectively.
Numerical experiments are conducted for two variable and two
trapping speed of sound models.

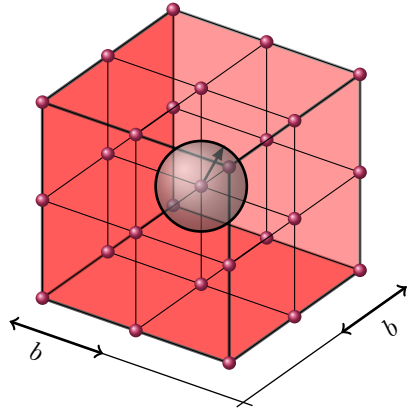
\begin{figure}[h!]
	\centering
	\tdplotsetmaincoords{60}{125}
	\tdplotsetrotatedcoords{0}{0}{0} 
	\begin{tikzpicture}[scale=1.5,tdplot_rotated_coords,
	cube/.style={very thick,black},
	grid/.style={very thin,gray},
	axis/.style={->,blue,ultra thick},
	rotated axis/.style={->,purple,ultra thick}]

	\draw[cube,fill=red, opacity=0.4] (0,0,0) -- (0,2,0) -- (2,2,0) -- (2,0,0) -- cycle;
	\draw[cube,fill=red, opacity=0.4] (0,0,0) -- (2,0,0) -- (2,0,2) -- (0,0,2) -- cycle;
	\draw[cube,fill=red, opacity=0.4] (0,0,2) -- (0,2,2) -- (2,2,2) -- (2,0,2) -- cycle;
	\draw[cube,fill=red, opacity=0.4] (2,2,2) -- (2,2,0) -- (0,2,0) -- (0,2,2) -- cycle;
	\draw[cube,fill=red, opacity=0.4] (2,0,0) -- (2,2,0) -- (2,2,2) -- (2,0,2) -- cycle;
	\node at (0.8,2.8,0)   { \rotatebox{40}{ \fontsize{10}{10}  $ b$} };
	\node at (2.8, 0.4,0)  { \rotatebox{-20}{ \fontsize{10}{10} $ b $} };

	\draw[->,-latex ,line width = 1pt]  (1,1, 1)-- (50:0.7cm) ;

	
	\draw[<-> ,  line width = 1pt]  (0,2.5,0)--(1,2.5,0);
	\draw[<-> , line width = 1pt]   (2.5,0,0)--(2.5,1,0);
	\draw[ line width = 0.5pt]      (2.5,0,0)--(2.5,2.6,0);
	\draw[line width = 0.5pt]       (0,2.5,0)--(2.6,2.5,0);

\foreach \x in {0,1,2}
\foreach \y in {0,1,2}
\foreach \z in {0,1,2}{
		\ifthenelse{  \lengthtest{\x pt < 2pt}  }
		{
			\draw [black]   (\x,\y,\z) -- (\x+1,\y,\z);
		}
		{
		}
		\ifthenelse{  \lengthtest{\y pt < 2pt}  }
		{
			\draw [black]   (\x,\y,\z) -- (\x,\y+1,\z);
		}
		{
		}
		\ifthenelse{  \lengthtest{\z pt < 2pt}  }
		{
			\draw [black]   (\x,\y,\z) -- (\x,\y,\z+1);
		}
		{
		}
		\shade[rotated axis,ball color = purple!80] (\x,\y,\z) circle (0.06cm);
	}
	\shade[ball color = gray!60, opacity = 0.6, line width = 1pt] (1,1,1) circle (0.4cm);
	\draw[line width = 1pt]  (1,1,1) circle  (0.4cm);
	\end{tikzpicture}\caption{The ball $B_a$ is considered to be contained inside a discrete 3D cubic region. The side length $l$ of the cube is chosen to be larger  than $4R$ in order to contain the full field $p(\edot, T)$.} \label{fig:3Dgrid}
\end{figure}

\subsection{Discretization and data simulation}

To implement the CG method, we have to discretize $\Ao$  and its adjoint
$\Ao^*$. For that  purpose we solve the forward and adjoint wave equation
\eqref{eq:waveP} and \eqref{eq:dual} on a cubical grid with nodes
\begin{align*}
 \f x_{i_1,i_2,i_3} \coloneqq -(b,b,b)  +\frac{2b}{N} (i_1,i_2,i_3)
 \quad \text{ for }  (i_1,i_2,i_3)   \in \set{0, \dots, N-1}  \,.
\end{align*}
For the solution \eqref{eq:waveP} and \eqref{eq:dual}  we use the $k$-space method \cite{cox2007,mast02}, which
we briefly explain it in Appendix \ref{app:kspace}.  The implementation of the $k$-space method
yields a $2b$ periodic solution.  The parameter $b$  is  selected sufficiently
large such that  the solution of  the wave equation \eqref{eq:wave1} with initial data supported in $B_a$
 coincides with its $2b$-periodic extension for all times $t \in [0, 2 T]$; compare Figure \ref{fig:3Dgrid}.

We denote by $X_N \subseteq \R^{N \times N \times N}$  the set of all
$\f f$ with $\f f_{i_1,i_2,i_3} = 0$ for  $\f x_ {i_1,i_2,i_3}   \not\in B_a$.
The discretized versions of $\Ao$  and its adjoint $\Ao^* $ are defined  by
\begin{align*}
&\Ao_{N,M} \colon X_N
\to   \R^{N \times N \times N} \colon  \f f  \mapsto \Wo_{N, M} \f f (\edot, M) \\
&\Ao_{N,M} ^\trans \colon \R^{N \times N \times N}  \to
X_N \colon  \f g
\mapsto \chi_{B_N} \Wo_{N, M} \f g (\edot, M) \,.
\end{align*}
Here $\Wo_{N, M}  \colon \R^{N\times N\times N} \to \R^{N\times N\times N\times (M+1)}$
denotes the discretized wave  propagation defined  by the $k$-space method
using the discrete time steps $j  T/M$ for $0 \leq j \leq M$.

The discrete and inverse Radon transforms $\Rad$ and $\Rad^\sharp$ are computed by the standard \textsc{Matlab} implementation of the Radon transform and its inverse.

\begin{figure}[htb!]
	\centering
	\includegraphics[width=\textwidth]{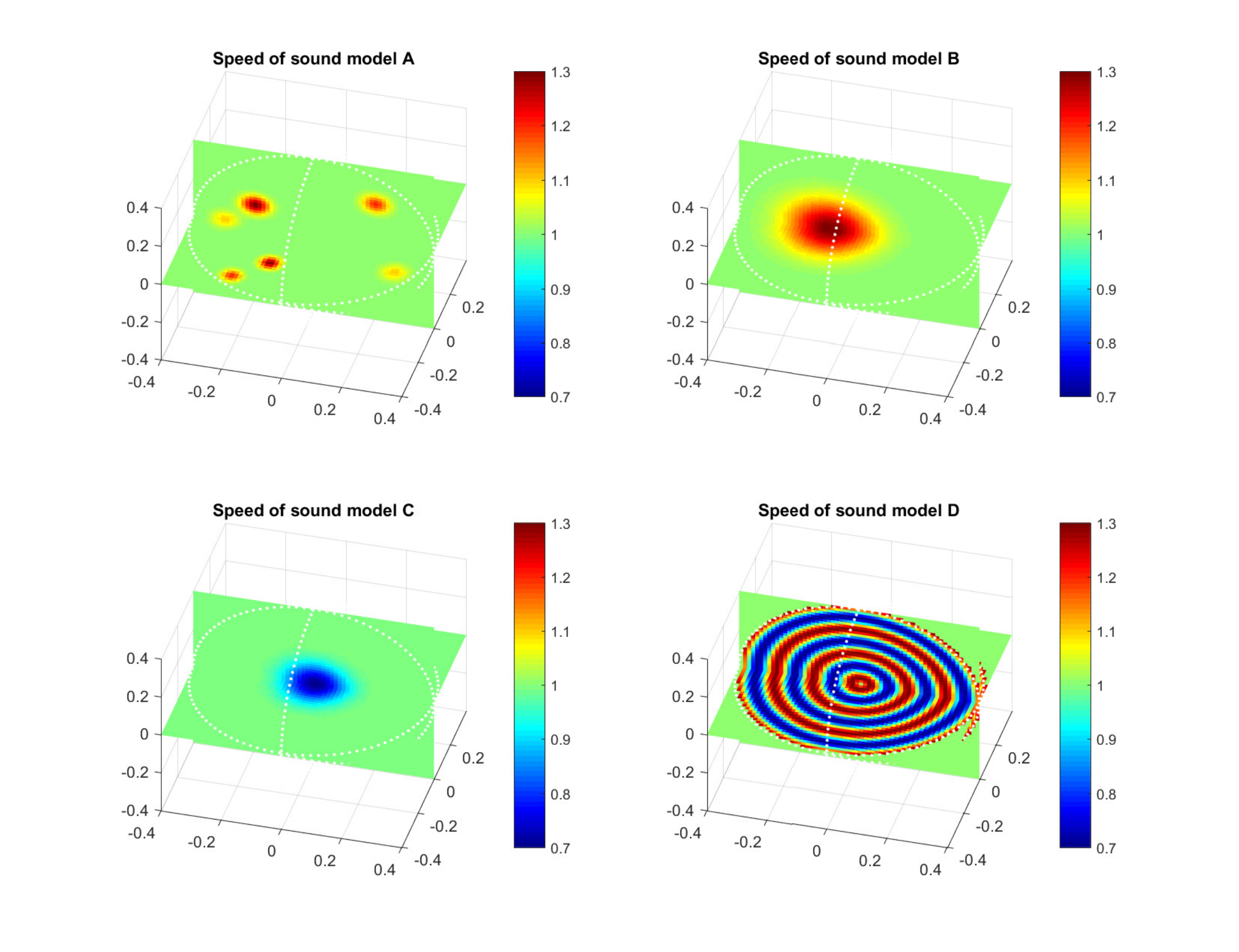}
	\caption{Top Row: Non-trapping variable speed of sound cases A and B.
	Bottom  row: Trapping speed of sound cases C and D. All cases are assumed to be distortions of the constant background speed of sound $c_0 = 1$.}\label{fig:sos}
\end{figure}

\subsection{Sound speed models and phantom}

In our numerical setup,  we use four different variable sound speed models (A, B, C and D)
which are  shown in Figure~\ref{fig:sos}.
\begin{figure}
	\centering
    \includegraphics[width=0.6\textwidth]{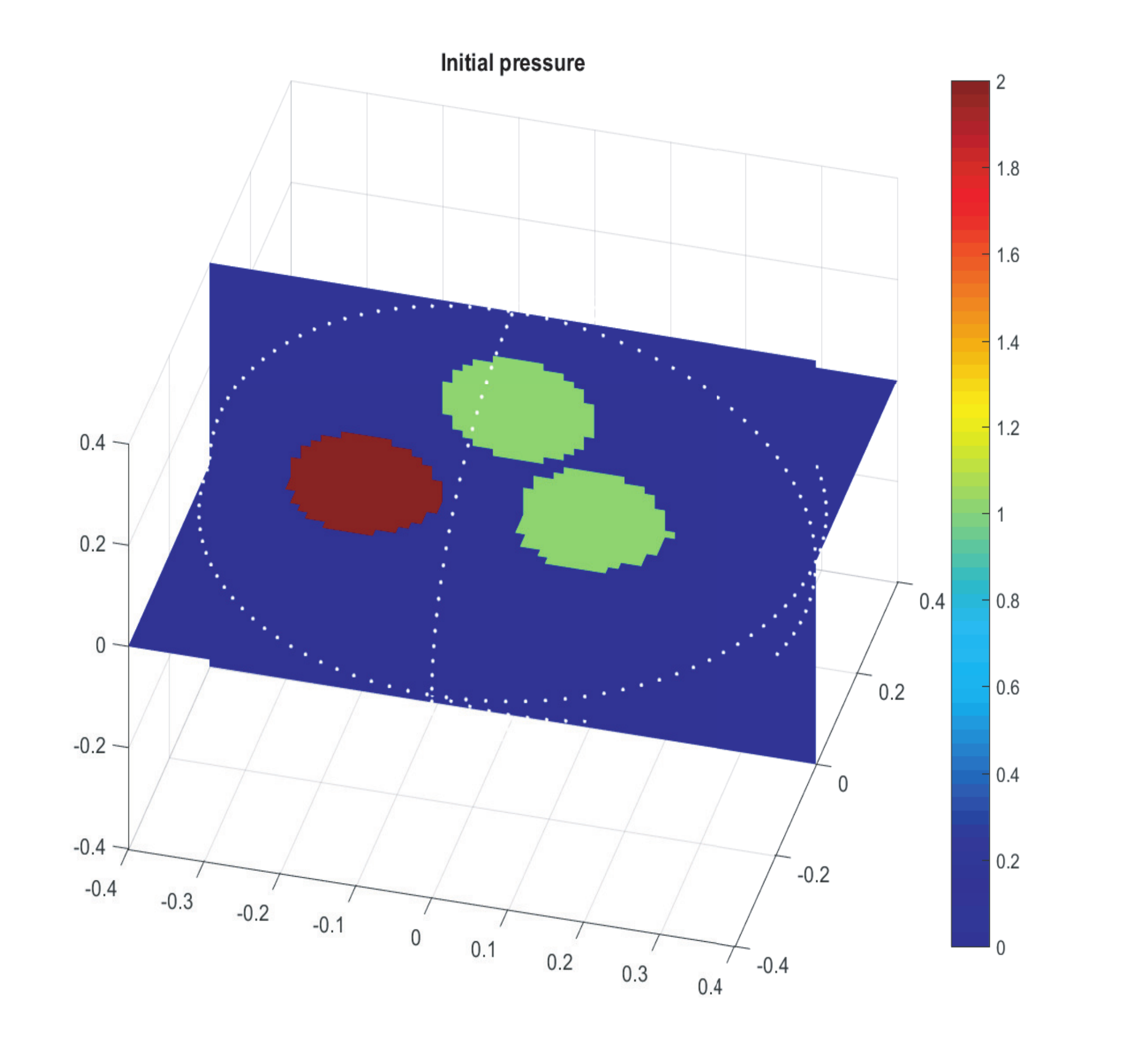}
	\caption{Initial pressure $f$ contained in  $B_a$ with $a = 0.4$ used for the presented numerical results.}\label{fig:Initialp}
\end{figure}
All variable sound speed models deviate within $30\%$ from the background sound speed $c_0 = 1$.
The two speed of sound models A and B (shown in the upper row) are acoustically non-trapping whereas
the speed of sound models C and D (shown  the bottom row) are trapping.
The speed of sound models A and B have the form
\begin{alignat}{2}
c(\f x)  = 1 +   \chi_{B_a}(\f x)  \sum_{j=1}^m  e^{- \alpha_m |\f x - \f y_j|^2},
\end{alignat}
where a sum of Gaussian pulses centered at $\f y_j$ added to the background sound speed.
The first non-trapping speed of sound model A consists of several pulses with small width, whereas the second model B
is a single pulse with a very large width.

In the trapping case C we consider a cavity in the middle of region $B_a$, which is the difference of the constant
speed of sound with a Gaussian pulse. The sound speed $D$ of sound is of the type
\begin{align*}
c(\f x)  =
\begin{cases}
 1 +   \beta \sin\left( \alpha |\f x|^2 \right) \quad 	&\f x \in B_a   \\
 1                         								&\f x  \not \in B_a\,.
\end{cases}
\end{align*}
Since in  cases C and D, $c$ is radially symmetric circles concentric to the origin are closed geodesics that never
leave $B_a$. Therefore, this sound speed cases serve as an test case for a trapping speed of sound.

We assume  the sum  of three solid spheres as initial pressure $f$,
which is depicted in Figure~\ref{fig:Initialp}.
The initial pressure distribution is contained inside the $B_a$ of radius $a = 0.4$.
In all the experiments we choose $T = 1.4$ and take $b = 2$.

\begin{figure}[htb!]
	\centering
	\includegraphics[width=\textwidth]{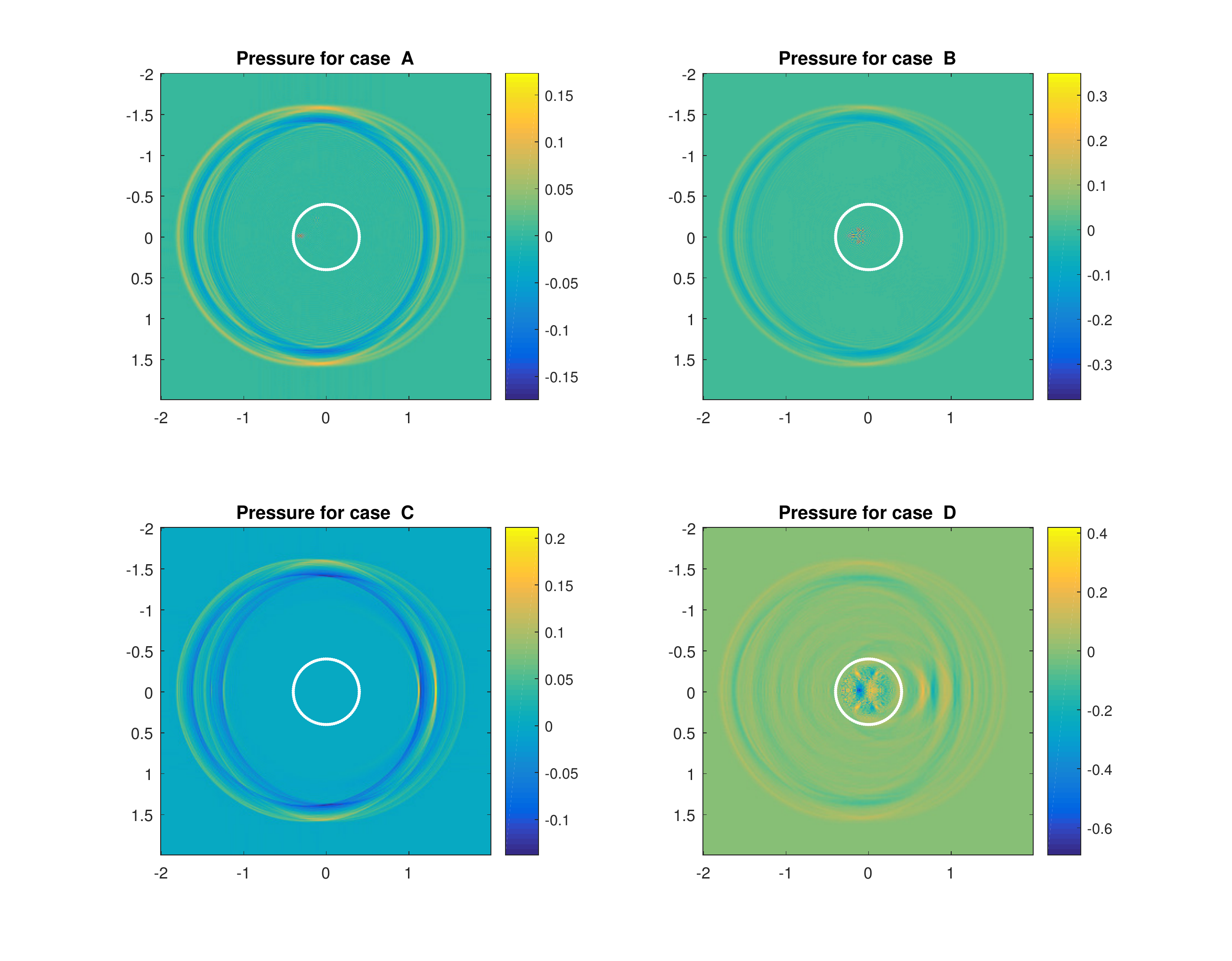}
    \caption{Slice of $\Ao f = p(\edot, T)$ through the plane $z=0$ for the
    sound speed  models A-D.}\label{fig:SlicesVS}
\end{figure}

\begin{figure}[htb!]
	\centering
    \includegraphics[width=\textwidth]{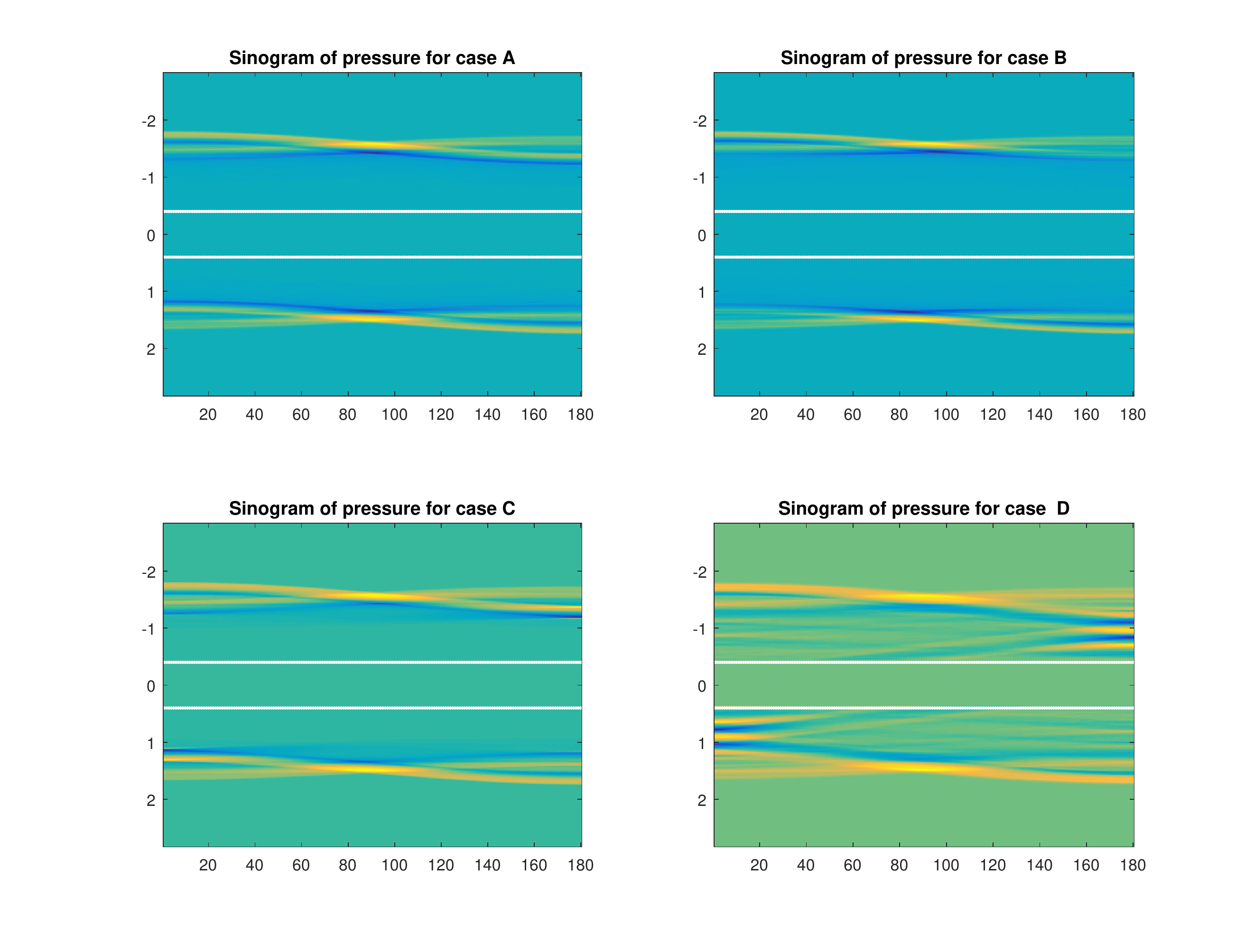}
	\caption{Simulated data $\Rad_a \Ao f$ at $z=0$ for sound speed models A-D.
	Values between the white lines (determined by the set $M_a$) extended are zero.}\label{fig:Radon}
\end{figure}

\begin{figure}[h!]
	\centering
    \includegraphics[width=\textwidth]{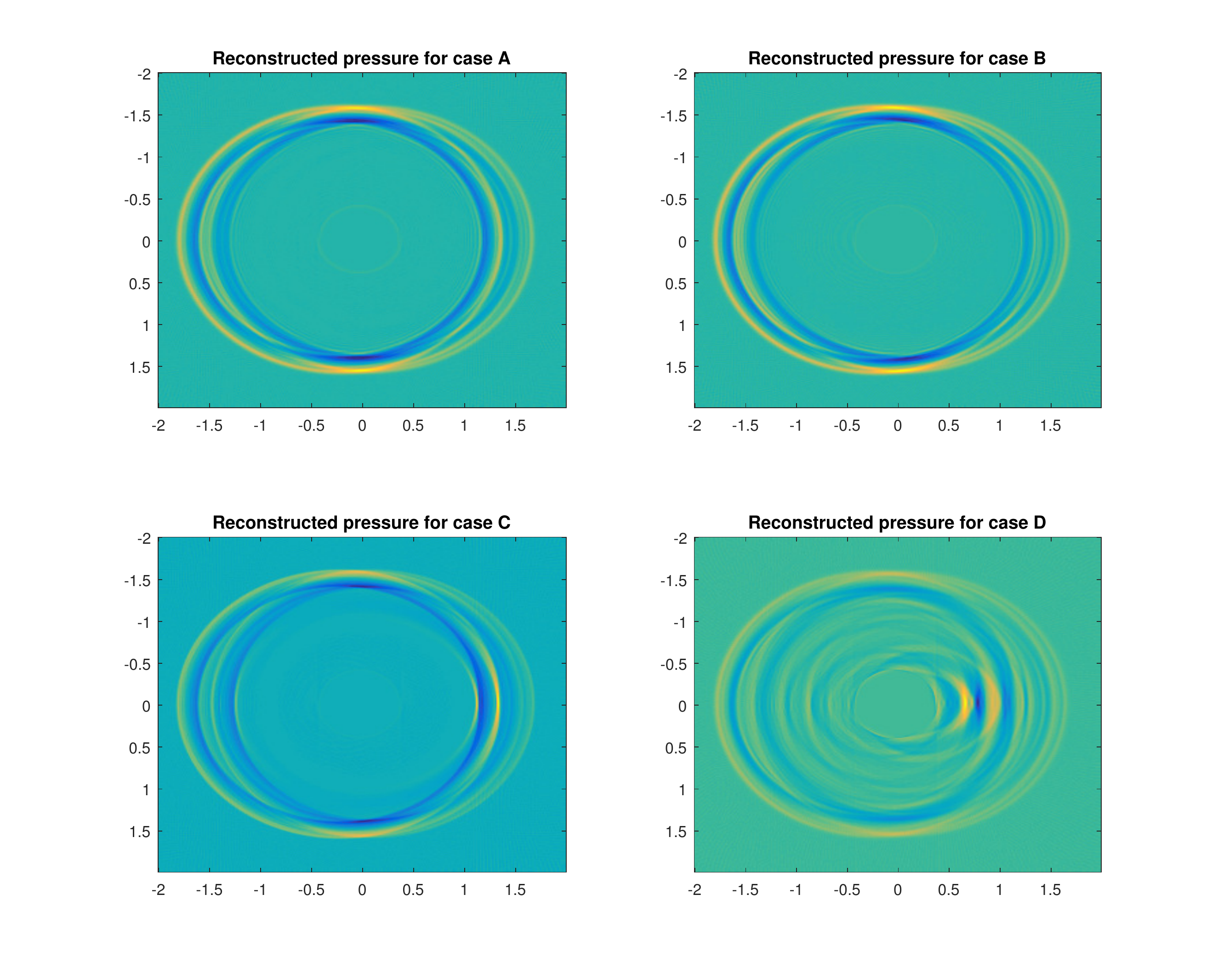}
	\caption{Slices of  the reconstruction of $\Ao f$ through the plane $z=0$ by
	applying $ \Rad^\sharp$ for the speed of sound models A-D.}\label{fig:IRadon}
\end{figure}

\subsection{Pressure simulation}

Figure~\ref{fig:SlicesVS} shows a slice of the numerically simulated data pressure
at $z=0$ for  the different speed of sound models A-D.
The simulations show that in the trapping speed of sound case pressure also decreases inside $B_a$
but at a slower rate.
Figure \ref{fig:Radon} shows full field data $g_a = \Rad_a p(\edot, T)$ for $z = 0$ and the different speed of sound models,
which are the exterior Radon transform of $p(\edot, T)$ restricted to the $xy$-plane. In the first three pictures, we see
that the Radon transform almost vanishes in the complement of $M_a$.

\begin{figure}[htb!]
	\centering
	\includegraphics[width=\textwidth]{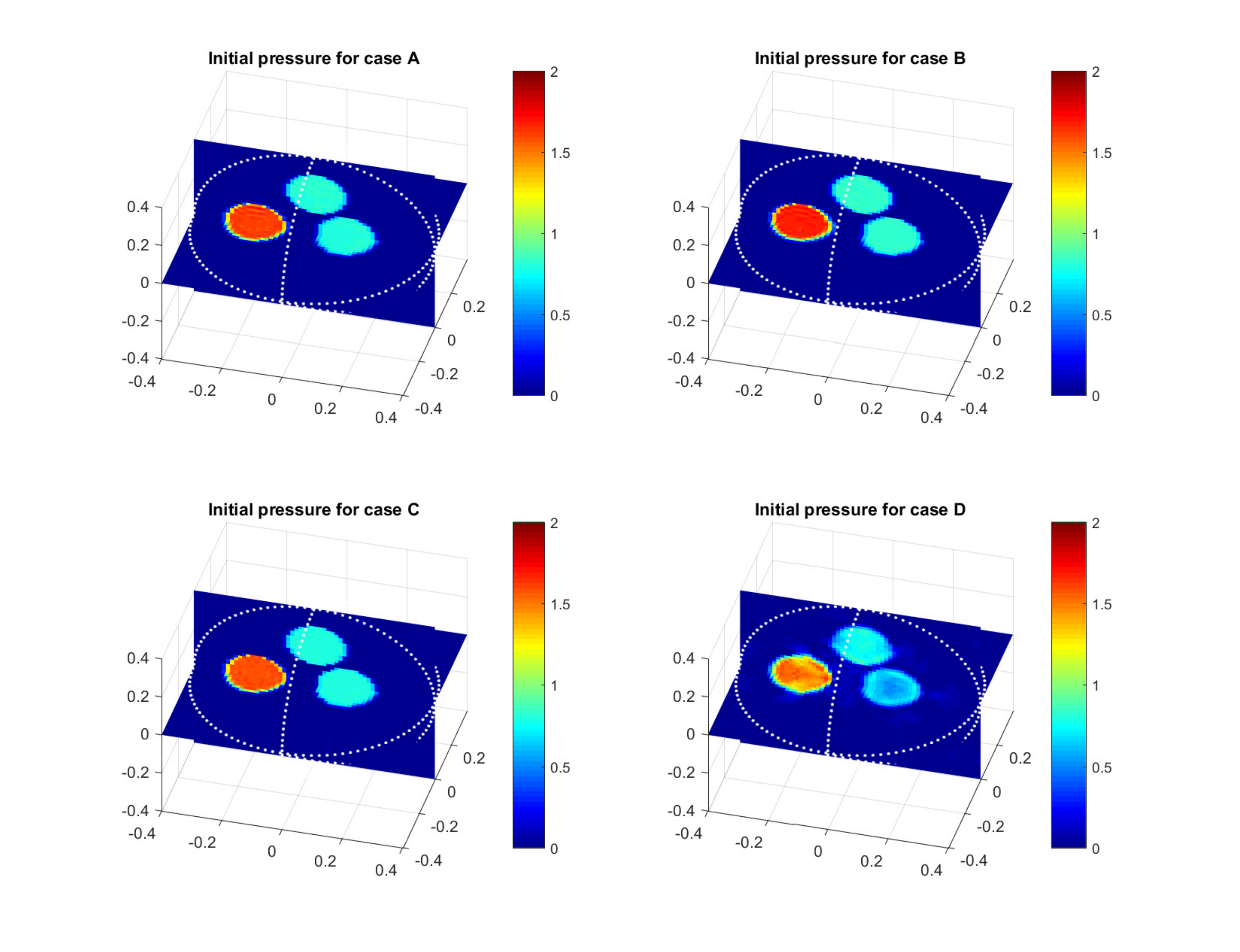}
	\caption{Reconstructions of the initial after four iterations with the  CG algorithm
	 for the speed of sound models A-D.}\label{fig:Initialp4}
\end{figure}

\begin{figure}[htb!]
	\centering
	\includegraphics[width=\textwidth]{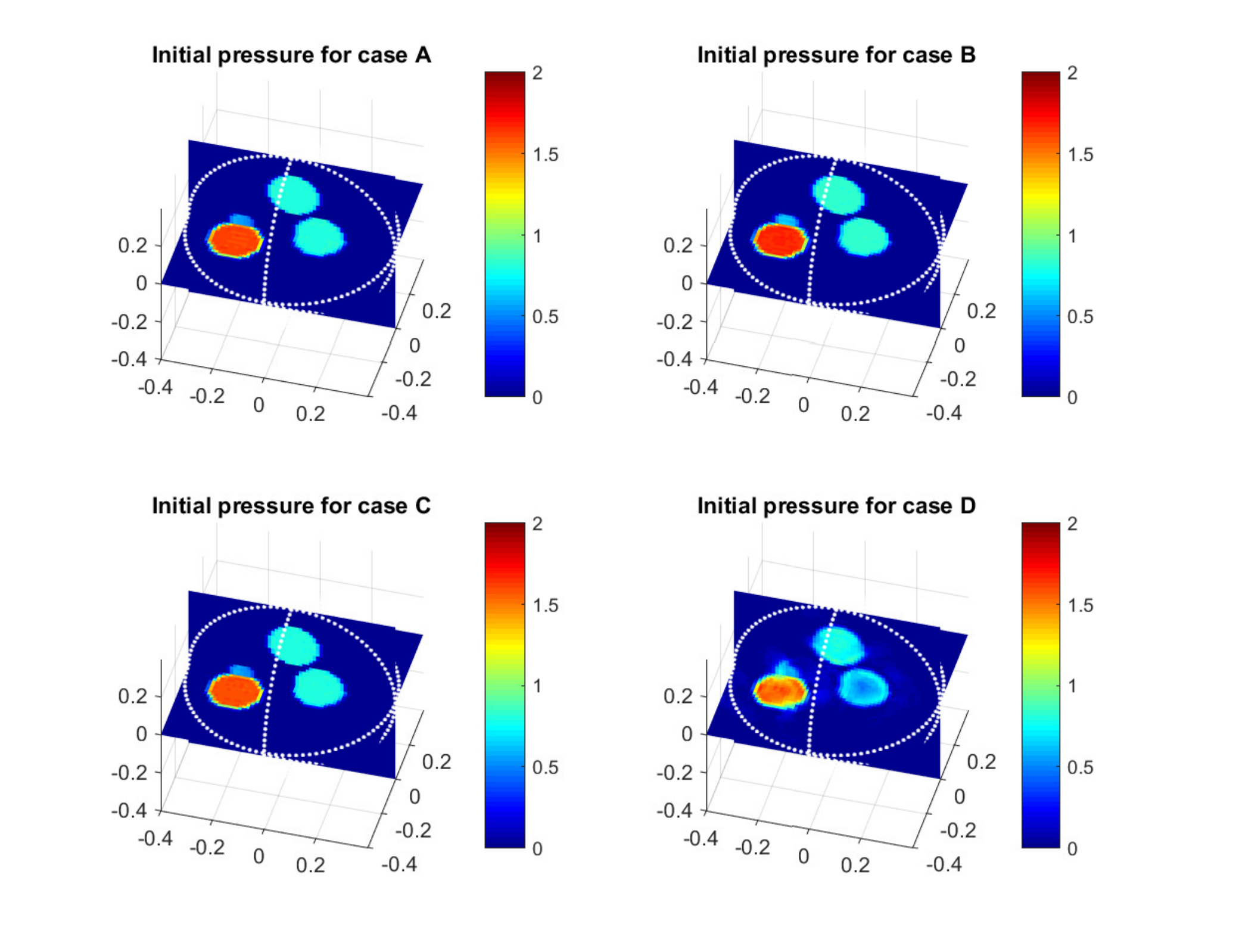}
	\caption{Reconstructions of the initial after one iteration with the  CG algorithm
	 for the speed of sound models A-D.}\label{fig:Initialp1}
\end{figure}

\subsection{Application of the CG method}\label{sex:num:Iterativ}

Figure \ref{fig:IRadon} shows reconstructions of the pressure fields in the $xy-$plane from the sinograms depicted Figure~\ref{fig:Radon}. As expected the pressure fields are reconstructed very accurately in the case of a non-trapping speed of sound.

Once point-wise data $p(\edot, T)$ are approximated on $\R^3$ we use the CG method outlined in section \ref{sec:MainResults} to numerically compute the initial pressure $f$ of system \eqref{eq:wave1}--\eqref{eq:wave3}.  Figure~\ref{fig:Initialp4} shows slice images of the initial pressure corresponding to the different sound speed cases after four iterations of the CG method.

As expected reconstructions are better for the non-trapping speed of sound models. Figure \ref{fig:Initialp1}
shows the output of the algorithm after one iteration. Finally, Figure \ref{fig:InitialwrongSOS} shows the reconstruction result,
when a wrong speed of sound (namely constant value 1) is used.  From this we can clearly see that not
accounting for  variable speed can introduce a significant error.

\begin{figure}[htb!]
	\centering
	\includegraphics[width=0.7\textwidth]{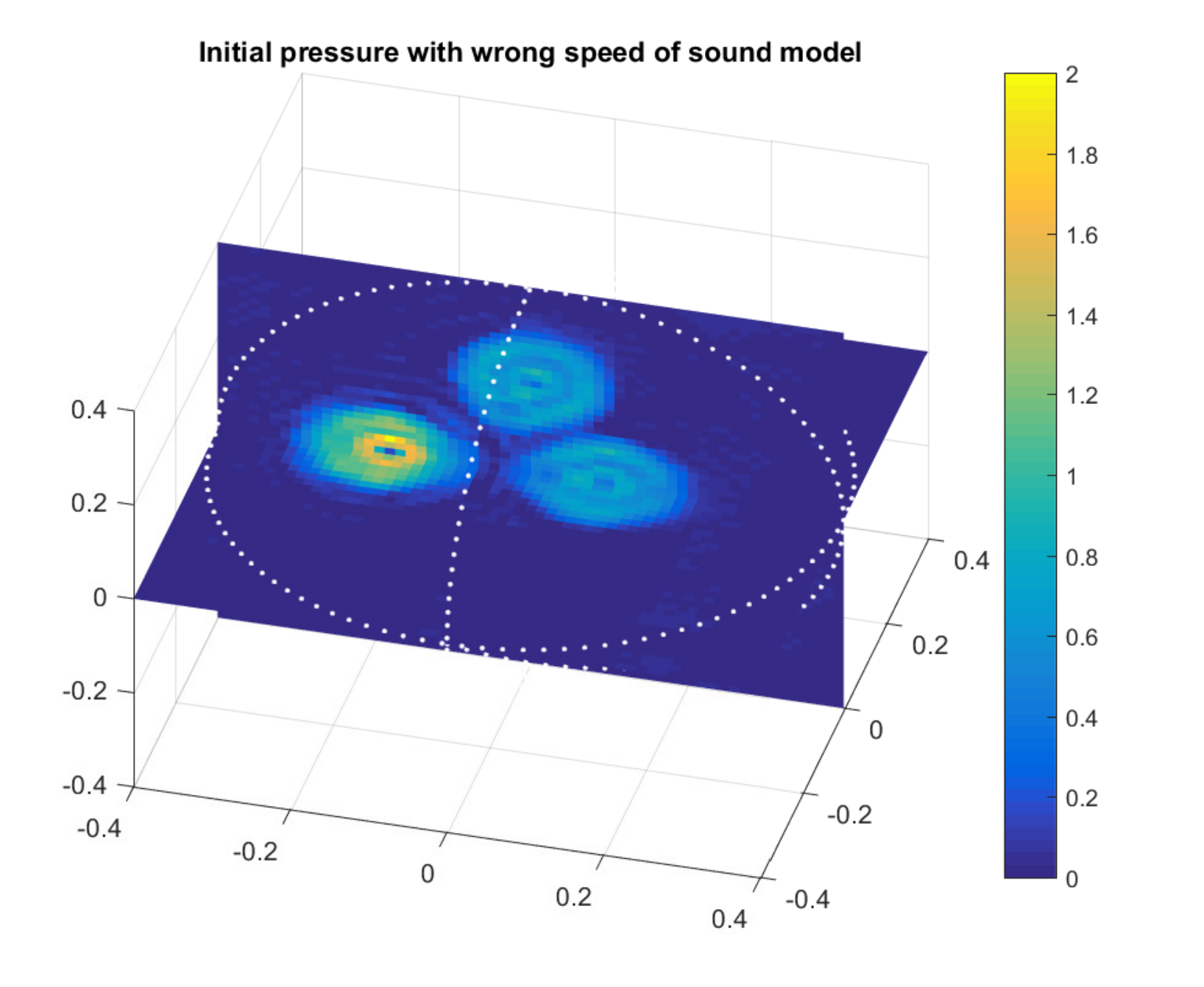}
	\caption{Reconstruction of the initial pressure after four iterations, when data are generated with speed of sound model B but the iterative CG algorithm assumes constant speed of sound with value one.}\label{fig:InitialwrongSOS}
\end{figure}

\section{Conclusion}

In this paper, we described a FFD-PAT method, where projection data of acoustic pressure are measured. For the first time, we consider this method for variable speed of sound. We developed a two-step reconstruction procedure that computes 3D acoustic pressure data point-wise in a first step and then uses them as input for an iterative algorithm in a second step. We prove uniqueness and stability estimates for the second step.  Furthermore, in upcoming works we will also study additional limited view problems for FFD-PAT which naturally arise in applications, for instance in the case of breast imaging.

\section*{Acknowledgement}

G.Z. and M.H.  acknowledge support of the Austrian Science Fund (FWF), project P 30747-N32.
The research of L.N. is supported by the National
Science Foundation (NSF) Grants DMS 1212125 and DMS 1616904.
The work of R.N. has been supported by the FWF, project P 28032.

\appendix

\section{Appendix}

\subsection{Uniqueness and stability}
\label{app:well}

Let us prove Theorem~\ref{T:stab}. To that end, we first prove two crucial results.

\begin{proposition} \label{P:unique} Assume that $p(\edot,T) =0$, then $f \equiv 0$. That is, $\Ao$ is injective. \end{proposition}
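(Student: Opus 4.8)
The plan is to write $p(\edot,t)=\cos(t\sqrt{A})f$ with $A\coloneqq -c^{2}\Delta$ regarded as a self-adjoint operator on the weighted space $L^{2}(\R^{3},c^{-2}\,d\f x)$, to upgrade the single vanishing condition $p(\edot,T)=0$ into a periodicity-in-time statement, and then to annihilate $f$ via the spectral theorem together with the absence of eigenvalues of $A$. First I would record two reflection identities. Since $p_t(\edot,0)=0$, the solution is even in time, $p(\f x,-t)=p(\f x,t)$, and it is defined for all $t\in\R$. Given $p(\edot,T)=0$, set $w(\f x,t)\coloneqq p(\f x,2T-t)$; then $w$ solves \eqref{eq:wave1} with Cauchy data $(0,-p_t(\edot,T))$ prescribed at $t=T$, and so does $-p$, so by uniqueness of finite-energy solutions of the wave equation $p(\f x,2T-t)=-p(\f x,t)$. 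Combined with evenness this gives $p(\f x,t+2T)=-p(\f x,t)$ for all $t\in\R$, i.e. $p$ is $2T$-antiperiodic (hence $4T$-periodic) in time.

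Next I would exploit this through the spectral calculus. The antiperiodicity reads $\cos\!\big((t+2T)\sqrt{A}\big)f=-\cos\!\big(t\sqrt{A}\big)f$ for every $t\in\R$. Testing this against the spectral measure $\mu_f$ of $f$ with respect to $A$ (for each rational $t$, $\int\big|\cos((t+2T)\sqrt{\lambda})+\cos(t\sqrt{\lambda})\big|^{2}\,d\mu_f(\lambda)=0$, then intersect over rational $t$ and use continuity) forces, for $\mu_f$-a.e.\ $\lambda$, the identity $\cos((t+2T)\sqrt{\lambda})=-\cos(t\sqrt{\lambda})$ for all $t$, i.e. $2T\sqrt{\lambda}\in\pi+2\pi\Z$. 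Hence $\mu_f$ is concentrated on the countable set $E\coloneqq\{\,((2k+1)\pi/(2T))^{2}\mid k\in\Z\,\}$, so $\mu_f=\sum_{\lambda\in E}\mu_f(\{\lambda\})\,\delta_\lambda$, and $\mu_f(\{\lambda\})>0$ would exhibit $\lambda\in E$ as an eigenvalue of $A$. Note $0\notin E$, so no zero-energy contribution can occur either.

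Finally I would invoke that $A=-c^{2}\Delta$ has no eigenvalue in $(0,\infty)$: an $L^{2}$ solution $v$ of $(-c^{2}\Delta-\lambda)v=0$ satisfies $-\Delta v=(\lambda/c_0^{2})\,v$ on $\R^{3}\setminus\overline{B_a}$, so Rellich's uniqueness theorem gives $v\equiv 0$ outside $\overline{B_a}$, and unique continuation for the elliptic operator $-c^{2}\Delta-\lambda$ then yields $v\equiv 0$. Therefore $\mu_f\equiv 0$ and $\norm{f}_{L^{2}}^{2}=\mu_f(\R)=0$, i.e. $f\equiv 0$. (In the non-trapping case one could instead conclude from $4T$-periodicity and the decay estimate of Lemma~\ref{lem:decay} that $p$ vanishes on $B_a\times\R$, whence $f=p(\edot,0)=0$ since $\supp f\subseteq B_a$; but the spectral argument needs no trapping hypothesis.)

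The routine parts are the reflection identities and the spectral bookkeeping; the one genuinely substantive ingredient — and the step I expect to be the main obstacle to state cleanly — is the absence of positive eigenvalues of $A$, together with making sure the functional calculus is applied to $L^{2}(B_a)$ data in the right way (so that $\cos(t\sqrt{A})$ is the relevant object) and that the antiperiodicity, not merely periodicity, is used so that $\lambda=0$ is automatically excluded.
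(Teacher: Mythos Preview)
Your argument is correct, and the first half (the reflection identities yielding $2T$-antiperiodicity, hence $4T$-periodicity) coincides with the paper's proof. The difference lies in how periodicity is converted into $f\equiv 0$. The paper observes that the periodic extension $\bar p$ must agree with $p$ for all time (same Cauchy data at $t=0$), and then invokes the non-trapping decay estimate of Lemma~\ref{lem:decay} to force $f=p(\edot,0)=\lim_{n\to\infty}p(\edot,4nT)=0$ on $B_a$. You instead pass through the spectral representation $p(\edot,t)=\cos(t\sqrt{A})f$, reduce antiperiodicity to the statement that $\mu_f$ is supported on a discrete set of would-be eigenvalues, and then rule these out via Rellich's theorem plus unique continuation. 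Your route is genuinely more general: it does not use the non-trapping hypothesis at all, whereas the paper's argument does through Lemma~\ref{lem:decay}. On the other hand, the paper's proof is shorter and avoids any spectral-theoretic machinery, staying entirely at the level of reflection and a quoted decay bound. Your parenthetical remark already identifies the paper's approach exactly.
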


\begin{proof} Let us construct a solution $\bar p$ of the wave equation which is periodic in time with period $4T$ such that $\bar p=p$ on $\R^3 \times [0,T]$. Once this is done, we obtain $f = \bar p (\edot,0) = \bar p (\edot,4^nT)$ for any $n$. Using Lemma~\ref{lem:decay}, we arrive at
$$f(\f x) = \lim_{n \to \infty} \bar p(\f x, 4^n T) = 0, \quad \forall~ \f x \in \R^3.$$
	
It now remains to construct the above-mentioned solution $\bar p$ of the wave equation. The idea is to properly reflect the solution $p$ in the time variable $t$ through the time moments $t=T,2T, \dots,$ as follows. We first construct $\bar p$ on $[0,2T]$ by the odd reflection of $p$ through the moment $t=T$: $\bar p(\edot,T) = p(\edot,T)$ for $t \in [0,T]$ and $\bar p(\edot,T) = - p(\cdot,2T-t)$ for all $t \in [T,2T]$. Since $p(\edot,T)=0$ on $\R^3$, we obtain that $\bar p$ and $\bar p_t$ are continuous at $t=T$. Therefore, $p$ is continuous on $[0,2T]$ and solves the wave equation on that interval.
	
	We note that $\bar p_t(\cdot,2T)=-\bar p_t(\cdot,0) =0$ on $\R^3$. By the even reflection through $t=2T$: $\bar p(\edot,T) = \bar p(\edot, 4T- t)$ for all $t \in [2T,4T]$, we obtain that $\bar p$ is a solution of the wave equation in $[0, 4T]$.  Finally, we extend the solution by periodicity with period $4T$. Noting that $\bar p(\edot, 0) = \bar p(\edot,4T)$ and $\bar p_t(\edot,0) = \bar p_t(\edot,4T) =0$, we obtain that $\bar p$ and $\bar p_t$ are continuous for all time and $\bar p$ satisfies the wave equation in $\R^3 \times \R_+$. This finishes our proof.
\end{proof}

\begin{proposition} \label{P:estimate}  There is a constant $C$ such that $$\|f\|_{L^2(\Omega)} \leq 2(\|\Ao f\|_{L^2(\R^3)} + \|Kf\|_{L^2(B_a)}),$$ where $K$ is a pseudo-differential operator of order at most $-1$.
\end{proposition}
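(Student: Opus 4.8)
The plan is to exploit the elliptic-type regularity gain that results from the vanishing of the Cauchy data at $t=0$ in the first time derivative, combined with an energy identity at the final time $T$. Concretely, let $p$ solve \eqref{eq:wave1}--\eqref{eq:wave3} with initial data $(f,0)$, so that $\Ao f = p(\edot,T)$. The first step is to write the natural conserved (in the constant-speed region) energy and track where mass can sit at time $T$. Multiplying the wave equation by $p_t/c^2$ and integrating over $\R^3 \times [0,T]$ gives the identity
\begin{equation*}
\int_{\R^3} \Bigl( \tfrac{1}{c^2(\f x)} \, p_t(\f x,T)^2 + \abs{\nabla p(\f x,T)}^2 \Bigr) d\f x
= \int_{\R^3} \abs{\nabla f(\f x)}^2 \, d\f x,
\end{equation*}
using $p_t(\edot,0)=0$. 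This controls $\norm{\nabla p(\edot,T)}_{L^2}$ and $\norm{p_t(\edot,T)}_{L^2}$ by $\norm{\nabla f}_{L^2}$, but the wrong direction; what I actually want is a lower bound on $\norm{\Ao f}$, so instead I run the energy argument on the adjoint/time-reversed picture.

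The cleaner route, and the one I would carry out, is a \emph{unique continuation plus compactness} argument dressed up quantitatively. First I would establish the qualitative injectivity — but that is already Proposition \ref{P:unique}. So the real content is upgrading injectivity to a stability estimate \emph{modulo a compact error}, which is exactly the shape of the claimed inequality $\norm{f}_{L^2(\Omega)} \le 2(\norm{\Ao f}_{L^2(\R^3)} + \norm{Kf}_{L^2(B_a)})$ with $K$ smoothing of order $-1$. The strategy: decompose $f = f_{\mathrm{high}} + f_{\mathrm{low}}$ by a frequency cutoff at scale $\lambda$. For the high-frequency part, use a microlocal/geometric-optics parametrix for the wave propagator $\Ao$: since $c = c_0$ outside $B_a$ and $T$ is not assumed large here, the solution $p(\edot,T)$ carries, along every bicharacteristic issued from $\supp f$, an amount of $L^2$-energy comparable (to leading order, uniformly in high frequency) to the corresponding piece of $f$ — the principal symbol of $\Ao$ is elliptic as an FIO because the canonical relation is a graph (the time-$T$ geodesic flow lifted to the cotangent bundle). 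Concretely, $\Ao^*\Ao$ is a pseudodifferential operator of order $0$ with \emph{strictly positive} principal symbol, so $\Ao^*\Ao = cI + K_1$ for some $c>0$ and $K_1 \in \Psi^{-1}$; a standard Gårding-type argument then yields $\norm{f}_{L^2} \le C\norm{\Ao f}_{L^2} + \norm{K f}_{L^2}$ with $K \in \Psi^{-1}$, and one then tracks constants to arrive at the factor $2$ (or simply absorbs it, since any fixed multiple of a $\Psi^{-1}$ operator is still $\Psi^{-1}$).

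The main obstacle — and the place I would spend the most care — is verifying that $\Ao^*\Ao$ is genuinely a pseudodifferential operator of order $0$ with positive principal symbol, \emph{and} that the negative-order remainder is honestly order $-1$ rather than merely compact. For this I would invoke Corollary \ref{cor:dual} to write $\Ao^* \Ao f = \chi_{B_a}\, q(\edot,T)$ where $q$ solves the wave equation with data $(p(\edot,T), p_t(\edot,T))$ at... — more precisely, compose the forward propagator to time $T$ with the (time-reversed) adjoint propagator back, obtaining a composition of two FIOs whose canonical relations are inverse graphs, hence a $\Psi$DO; the order-$0$ claim follows from symbol calculus for the half-wave parametrices, and positivity of the principal symbol from the fact that the two half-wave components never cancel (they live on $\tau = \pm c\abs{\xi}$ separately). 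The subtle point is that the free-space wave flow is not compactly supported in the variable-speed region, so I must check that the "exterior" contribution (where $c=c_0$) and the finiteness of $T$ do not destroy the graph structure — it does not, because geodesics are still well-defined and the flow is a diffeomorphism of phase space. Once $\Ao^*\Ao = c_0' I + K$, $K \in \Psi^{-1}$, is in hand, the inequality is immediate: $c_0'\norm{f}^2 \le \sinner{\Ao^*\Ao f}{f} + \sabs{\sinner{Kf}{f}} \le \norm{\Ao f}^2 + \norm{Kf}_{L^2(B_a)}\norm{f}_{L^2(B_a)}$, and a routine Cauchy–Schwarz / Young step (relabelling $K$ by a harmless constant multiple and using $\supp f \subset B_a$) produces the stated form.
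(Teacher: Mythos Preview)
Your overall strategy matches the paper's: both arguments show that $\Ao^*\Ao$ is a pseudodifferential operator of order zero with strictly positive principal symbol, write $\Ao^*\Ao = cI + K$ with $K \in \Psi^{-1}$, and deduce the estimate from $\langle \Ao^*\Ao f,f\rangle = c\|f\|^2 + \langle Kf,f\rangle$. The paper carries this out via the half-wave parametrix $\Ao = \Ao_+ + \Ao_-$ and computes the principal symbol of $\Ao^*\Ao$ to be exactly $\tfrac{1}{2}$ (each diagonal piece $\Ao_\pm^*\Ao_\pm$ contributes $\tfrac{1}{4}$, by tracking the leading amplitude along bicharacteristics via the transport equation), which is where the explicit constant $2$ comes from.

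There is, however, a genuine gap in your treatment of the cross terms. The operator $\Ao$ is not an FIO associated to a single graph: with $\Ao = \Ao_+ + \Ao_-$ and $\Ao_\pm$ associated to the time-$(\pm T)$ geodesic flow, forming $\Ao^*\Ao$ produces four terms, and only the diagonal pieces $\Ao_\pm^*\Ao_\pm$ are $\Psi$DOs. The cross terms $\Ao_\mp^*\Ao_\pm$ are FIOs whose canonical relation is the graph of the time-$(\pm 2T)$ geodesic flow, which is \emph{not} the identity. Your justification that ``the two half-wave components \ldots\ live on $\tau = \pm c|\xi|$ separately'' does not resolve this: once you restrict to the time slice $t = T$, the $\tau$-variable is gone and the wavefront sets of $\Ao_+ f$ and $\Ao_- f$ can in principle overlap in $T^*\R^3$. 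What actually makes the cross terms infinitely smoothing as operators on $B_a$ is that, under the non-trapping hypothesis and for $T$ sufficiently large, every geodesic segment of length $2T$ with one endpoint in $B_a$ has its other endpoint outside $B_a$; hence the canonical relation of $\Ao_\mp^*\Ao_\pm$, restricted to $T^*B_a \times T^*B_a$, is empty. This is precisely the mechanism the paper invokes, and it directly contradicts your parenthetical ``$T$ is not assumed large here'': that assumption is essential, and without it $\Ao^*\Ao$ is genuinely not a $\Psi$DO on $B_a$ (indeed, for small $T$ one has $\Ao^*\Ao \approx \cos^2(TP) = \tfrac{1}{2}(I + \cos(2TP))$, and the second term is an honest FIO moving singularities within $B_a$).
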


\begin{proof} Let us recall the parametrix formula for the solution $p$ of the wave equation (\ref{eq:wave1})--(\ref{eq:wave3}) (e.g., \cite{Treves2}):
	\begin{eqnarray*}
	p(\f y,t) &=& \frac{1}{(2 \pi)^3} \sum_{\pm} \int_{\R^3}  a_\pm(\f y,t,\xi) e^{i \phi_\pm(\f y,T,\xi)} \hat f(\xi) \, d\xi = \sum_{\pm} p_\pm(\f y,t).
	\end{eqnarray*}
	Here, the phase function $\phi_\pm$ solves the eikonal equation
	$$\partial_t \phi_\pm(\f y,t,\xi)  \pm  c(\f y) \, |\nabla_{\f y} \phi_\pm(\f y,t,\xi)| = 0 , \quad (\f y,t) \in \R^3 \times \R_+ $$ with the initial condition
	$$\phi_\pm(\f x,0,\xi) = \f x \cdot \xi. $$
	The amplitude function is a classical symbol $a_\pm(\f y,t,\xi) =\sum_{k=0}^\infty a_{-k,\pm}(\f y,t,\xi)$, where $a_{-k}$ is homogeneous of order $-k$ in $\xi$. Its leading term $a_{0,\pm}$ satisfies the transport equation
\begin{equation} \label{E:transport} \big(\partial_t \phi_\pm(\f y,t,\xi)  \partial_t   - c^2(\f y)  \nabla_{\f y} \phi_\pm(\f y,t,\xi) \cdot \nabla_{\f y} + C_\pm(\f y, t,\xi) \big) a_{0,\pm}(\f y,t,\xi) = 0,\end{equation} with the initial condition $a_{\pm,0} (\f x,0,\xi) = \frac{1}{2}$ (see \cite{SteUhl09}). Here, $C(\f y, \xi, t)$ only depends on the sound speed $c$ and the phase function $\phi_\pm$. Let us denote by $\gamma_{\f x,\xi}$ the unit speed geodesics originating at $\f x$ along the direction $\xi$. Then, $\gamma_{\f x, \xi}$ is a characteristics curve of the above transport equation; that is, (\ref{E:transport}) reduces to a homogeneous ODE on each geodesic curve.

We then write
\begin{eqnarray*} \Ao(f)(\f y) &=& \frac{1}{(2 \pi)^3} \sum_{\pm} \int_{\R^3}  a_\pm(\f y,T,\xi) e^{i \phi_\pm(\f y,T,\xi)} \hat f(\xi) \, d\xi  = \sum_{\pm} \Ao_\pm(f)(\f y).
	\end{eqnarray*}	
	Each operator $\Ao_\pm$ is a Fourier integral operator (FIO) with the canonical relation given by the pairs $(\f y_\pm,\lambda \eta_\pm; \f x,\lambda \xi)$ for any $\lambda \in \R$, $\xi,\eta$ unit vectors, $\f y_\pm=\gamma_{\f x,\xi}(\pm T)$, and $\eta_\pm=\dot \gamma_{\f x, \xi}( \pm T)$. Let $\R^3$ be equipped with the metrics $c^{-2}(\f x)~ d \f x^2$. Then, $(\f y_\pm,\eta_\pm)$ is obtained by translating $(\f x,\xi)$ on the geodesic $\gamma_{\f x, \pm \xi}$ by the distance $T$.  From the initial condition of $\phi_\pm$ and $a_{0,\pm}$ we see that, up to lower order terms, \begin{equation}\label{E:split} p_-(\f x,0)= p_+(\f x,0)= \frac{1}{2} f(\f x). \end{equation} Heuristically, under equation (\ref{eq:wave1})--(\ref{eq:wave3}), each singularity of $f$ at $(\f x,\xi)$ is broken into two equal parts. They propagate along the geodesic $\gamma_{\f x, \xi}$ in the opposite directions $\pm \xi$ to generate a singularity of $\Ao(f)$ at $(\f y_\pm, \eta_\pm)$.
	
	From the standard theory of FIOs (see \cite{Hormander}), the adjoint $\Ao_\pm^*$ translates $(\f y_\pm, \eta_\pm)$ back to $(\f x, \xi)$ and $\Ao^*_\pm \Ao_\pm$ is a pseudo differential operator. On the other hand, $\Ao_\mp^* \Ao_\pm$ is a FIO whose canonical relation consists of the pairs $(\f y, \eta; \f x, \xi)$ given by $\f y=\gamma_{\f x,\xi}(\pm 2T)$, and $\eta=\dot \gamma_{\f x,\xi}(\pm 2T)$. That is, $\Ao_\pm^* \Ao_\mp$ is an infinitely smoothing operator on $B$. Therefore, microlocally, we can write
	\begin{eqnarray}
	\Ao^* \Ao f &=& \Ao_+^* \Ao_+(f) + \Ao_-^* \Ao_- (f).
	\end{eqnarray}
We will show that the principal symbol $\alpha_\pm(\f x,\xi)$ of $\Ao_\pm^* \Ao_\pm$ satisfies $\alpha_\pm(\f x, \xi) = \frac{1}{4}$.
This result can be intuitively understood as follows.
Let us consider $\Ao_+^* \Ao_+$ and a singularity of $f$ at $(\f x,\xi)$. Under equation (\ref{eq:wave1})--(\ref{eq:wave3}), half of this singularity propagates into the direction $\xi$ (corresponding to the function $p_+$). At the moment $t = T$, it is transformed to a singularity of $\Ao_+(f) = p_+(T)$ at $(\f y_+, \eta_+)$. Under the adjoint equation (\ref{eq:dual}), half of this singularity propagates back to $(\f x, \xi)$ at $t=0$ to generate a singularity of $\Ao_+^* \Ao_+(f)$. It is natural to believe that this recovered singularity is $\frac{1}{4}$ of the original singularity of $f$ (due to twice splitting, as described). The proof below verify this intuition.

Indeed, denote by $q_+$ the solution of the time-reversed wave equation, e.g., equation \eqref{eq:dual}, with the initial condition given by $g_+ =  \Ao_+(f)$. Then,  by definition (see Theorem~\ref{thm:dual}) $ \Ao^* g_+ = q_+(\edot, 0)|_B$. The solution $q_+$ can be decomposed into the sum $q_+=q_0 +q_1$. Here, $q_0,q_1$, up to smooth terms, are solutions of the wave equations in $\R^3 \times (0,T)$ and satisfy $q_0(\edot,0) = \Ao_+^*(g_+)$, $q_1(\edot,0) = \Ao_-^* (g_+)$.  We are only concerned with $q_0$ since it defines $\Ao^*_+ \Ao_+ f = q_0(\edot, 0)$. We can write
\begin{equation}\label{E:q0}
q_0(\f y,t) = \frac{1}{(2 \pi)^3} \int_{\R^3}  b(\f y,t,\xi) e^{i \phi_+(\f y,t,\xi)} \hat f(\xi) \, d\xi .
\end{equation}
Let $b_0$ be the principal part of $b$. Then, the principal symbol $\alpha_+$ of $\Ao^*_+ \Ao_+$ is given by $\alpha_+(\f x, \xi) = b_0(\f x, 0, \xi)$. We note that $b_0$ satisfies the same equation as $a_{0,+}$ (see \eqref{E:transport} ). Therefore, on each bicharacteristic curve the ratio $b_0/a_{0,+}$ is constant. That implies $b_0(\f x, 0, \xi) = a_{+,0}(\f x, 0, \xi) \frac{b_0(\f y_+, T, \eta_+)}{a_{+,0}(\f y_+, T, \eta_+)}$.
Similarly to the argument below equation (\ref{E:split}), up to lower order terms, we have $$q_0(\f y_+,T) = \frac{1}{2} g_+(\f y_+,T)= \frac{1}{(2 \pi)^3} \int_{\R^3}   \frac{1}{2}  a_+(\f y_+,T,\xi) e^{i \phi_+(\f y_+,T,\xi)} \hat f(\xi) \, d\xi.$$
This and equation \eqref{E:q0} implies that $b_{0}(\f y_+, T,\xi) = \frac{1}{2} a_{+,0}(\f y_+, T,\xi)$. Therefore we obtain
$$b_{0} (\f x, 0,\xi) = \frac{1}{2} a_{+,0}(\f x, 0,\xi) = \frac{1}{4} .$$
Combining with a similar argument for $\Ao_-^* \Ao_-$, we obtain that the principal symbol of $\Ao^* \Ao$ is $\alpha(\f x, \xi)= \frac{1}{2}$. That is, $\Ao^* \Ao= \frac{1}{2} I + K$, where $K$ is a pseudodifferential operator of order at most $-1$.

Now $$(\Ao f, \Ao f) = (\Ao^* \Ao f, f) = \frac{1}{2} (f,f) + (K f,f).$$
We conclude that
	$$\|f\|^2_{L^2} \leq 2 (\|\Ao f\|^2_{L^2}  + \|Kf\|^2_{L^2}) \,.  \qedhere $$
\end{proof}

We are now ready to prove Theorem~\ref{T:stab}.

\begin{proof}[\bf Proof of Theorem~\ref{T:stab}]
Let us recall from Proposition~\ref{P:estimate} $$\|f\|_{L^2(B_a)} \leq 2(\|\Ao f\|_{L^2(\R^3)} + \|Kf\|_{L^2(B_a)}),$$ where $K$ is a pseudo-differential operator of order at most $-1$. Since $K$ is compact and $\Ao$ is injective, applying \cite[Theorem V.3.1]{taylor1981pseudodifferential}, we obtain
	$$\|f\|_{L^2(B_a)} \leq C \|\Ao f\|_{L^2(\R^3)} $$
for some constant $C \in (0, \infty)$.
This finishes our proof.
\end{proof}

\subsection{$k$-space method}
\label{app:kspace}

We briefly describe the $k$-space method  for the 3D wave equation \eqref{eq:wave1}--\eqref{eq:wave3}
as we use it for the numerical computation of  $\Ao$ and $\Ao^*$.
The $k$-space method is an attractive alternative  to standard methods
using finite differences, finite  elements or pseudospectral methods,
since it does not suffer from numerical dispersion \cite{cox2007,mast02}.
It utilizes the decomposition $p\left(\f x, t \right) = w\left(\f x, t \right) - v(\f x, t)$, where $v, w$ are defined by
\begin{align*}
w(\f x, t) :=  \frac{c_0^2}{c^2(\f x )} \,  p(\f x, t) \, \, \text{and} \, \, v(\f x, t) = \left(  \frac{c_0^2}{c^2(\f x )}  - 1 \right)  p(\f x, t),
\end{align*}
where $c_0 := \max \{c(\f x) : \f x \in \R^3\} $ denotes maximal speed of sound. It can be checked that with this definition of $v$ and $w$ the wave equation with variable speed of sound splits into the system
\begin{alignat*}{2}
w_{tt} (\f x, t) - c_0^2  \Delta w(\f x, t) &=   - c_0^2 \Delta v(\f x, t),\\						
v(\f x, t)		&=  \frac{c_0^2 - c(\f x)^2}{c_0^2} w(\f x, t).
\end{alignat*}

In the $k$-space method we use the time stepping formula
\begin{alignat}{2}\label{eq:TimeStep}
w(\f x, t + h_t) &= 2 w(\f x, t ) - w(\f x, t - h_t)\\*[0.2cm]
&-4 \Ft_\xi^{-1} \left\{ \sin\left(  \frac{c_0 |\xi| h_t}{2}   \right)^2   \Ft_{\f x} \left\{ w(\f x, t) - v(\f x, t)  \right\}  -  \left(\frac{c_0 h_t}{2} \right)^2       \right\}, \nonumber
\end{alignat}
where $\Ft_x$ and $\Ft_\xi^{-1}$ denote the Fourier transform and its inverse with respect to space and frequency variables $\f x$ and $\xi$ and $h_t$ is the time step size.
This equivalent formulation motivates the following algorithm for numerically
solving the wave equation.

\begin{framed}
\begin{alg}[$k$-space method for numerically solving {\eqref{eq:wave1}}--{\eqref{eq:wave3}}]\mbox{}\vspace{-1em}
	\begin{enumerate}[label=(S\arabic*),itemsep=0em]
		\item Define initial conditions:
		\begin{alignat*}{2}
		w(\f x, - h_t) = &w(\f x, 0) &&= c_0^2/c^2(\f x) f(\f x), \\
		&v(\f x, 0)     &&=  ( c_0^2/c^2(\f  x) - 1) f(\f x)
		\end{alignat*}
		\item   	Set $t =0$
		\item       Compute $w(\f x, t + h_t)$ according to equation \eqref{eq:TimeStep}
		\item  	    Compute $v(\f x, t + h_t):= (c^2(\f x)/c_0^2 -1 ) w (\f x, h_t)   $
		\item 		Compute $p(\f x, t + h_t):= w(\f x, t + h_t) - w(\f x, t + h_t)$
		\item       Substitute $ t $ by $t + h_t$ and go back to step (3).
	\end{enumerate}	
\end{alg}
\end{framed}

\end{document}